\documentclass[notitlepage,12pt,reqno]{amsart}
\usepackage{amsmath, amssymb, amsfonts, amstext, amsthm, color,tocvsec2}
\usepackage[breaklinks=true]{hyperref}
\usepackage[mathscr]{euscript}
\usepackage{enumerate}
\usepackage[letterpaper]{geometry}
\geometry{hmargin={1in, 1in}, vmargin={1in, 1in}}
\usepackage{float}
\usepackage{pgf,tikz,ulem}
\newtheorem{thm}{Theorem}[section]
\newtheorem{defn}[thm]{Definition}
\newtheorem{lem}[thm]{Lemma}
\newtheorem{cor}[thm]{Corollary}
\newtheorem{prop}[thm]{Proposition}
\newtheorem{ex}[thm]{Example}
\newtheorem{rem}[thm]{Remark}

\def\be{\begin{equation}}
\def\ee{\end{equation}}
\def\bea{\begin{eqnarray}}
\def\eea{\end{eqnarray}}
\textheight     240 true mm \oddsidemargin   -0.2cm
\evensidemargin  -0.2cm

\numberwithin{equation}{section}
\begin{document}
	
	\title[Matrices with simple symmetric digraphs and their group inverses]{Matrices with simple symmetric digraphs and their group inverses}

	\author{Raju Nandi}
	\address[R.~Nandi]{Department of Sciences, Indian Institute of Information Technology Design and Manufacturing, Kurnool, Andhra Pradesh 518008, India}
	\email{\tt rajunandirkm@iiitk.ac.in, rajunandirkm@gmail.com}
	
	\date{\today}
	
	\begin{abstract}
		A new class of simple symmetric digraphs called $\mathcal{D}$ is defined and studied here. Any digraph in $\mathcal{D}$ has the property that each non-pendant vertex is adjacent to at least one pendant vertex. A graph theoretical description for the entries of the group inverse of a real square matrix with any digraph belonging to this class is given. We classify all the real square matrices $A$ such that the digraphs associated with $A$ and $A^{\#}$ both are in $\mathcal{D}$, that is, the digraph related to $A$ is either a corona or a star digraph.
	\end{abstract}
	
	\keywords{Group inverse; Simple symmetric digraph; Maximum matching; Alternating cycle chain.}
	
	\subjclass[2020]{05C20 (primary); %
		05C50, 05C76 (secondary)}

 \maketitle
\section{Introduction}
Firstly, let us recall the definition of the group inverse of a matrix, the object of primary interest here. For a real $n\times n$ matrix $A$, the {\it group inverse}, if it exists, is the matrix $X$ that satisfies the equations $AXA=A, XAX=X$ and $AX=XA$. Such an $X$ is always unique and denoted by $A^{\#}$. It is well-known that the group inverse of $A$ exists if and only if $rank(A)=rank(A^2)$. For a non-singular matrix $A$, $A^{-1}=A^{\#}$. Let us recall that for a real rectangular matrix $A$, the {\it Moore-Penrose inverse} of $A$, is the unique matrix $A^{\dagger}$ that satisfies the equations $AA^{\dagger}A=A,~A^{\dagger}AA^{\dagger}=A^{\dagger},~(AA^{\dagger})^T=AA^{\dagger}$ and $(A^{\dagger}A)^T=A^{\dagger}A$. We refer the reader to \cite{bg} for more details on these notions of generalized inverses and Moore-Penrose inverses.

In combinatorial matrix theory, an interesting problem is giving the inverse or group inverse of a matrix using its graph structure. Here are some articles that explain how to determine the inverse \cite{akbari}, \cite{ghorbani}, \cite{sms}, \cite{car}, \cite{klein}, \cite{sj} and the group inverse \cite{bov}, \cite{encinas}, \cite{cov}, \cite{mdp}, \cite{maca}, \cite{nandi}, \cite{pavsi}. In chemistry, the group inverse of adjacency matrices is used to determine the separation gap of a molecule using graph energies \cite{sn}.
In addition, the group inverses of the matrices have applications in algebraic connectivity \cite{kirkland}, Markov chains \cite{campbell} and resistance distance \cite{bu}.

We recall some well-known results on the role of generalized inverses in graph theory. In \cite[Theorem 2.2]{cov}, authors computed the group inverse of the bipartite matrix of the form $A=\begin{pmatrix}
0 & B\\
C & 0
\end{pmatrix}$ and presented a graphical description for the entries of $A^{\#}$ for path digraph $D(A)$. In the same article, they proposed a conjecture (\cite[Conjecture 5.1]{cov}) on the entries of $A^{\#}$ when $D(A)$ is a tree digraph and recently, we solved this conjecture for a special class of trees in \cite[Theorem 2.11]{nandi}. A block-wise description for $A^{\#}$ was given in \cite[Theorem 2.1]{mdp} when $B,C$ of the form $[X\ U], [Y\ V]^T$ respectively, with $X,Y$ non-singular and $rank(UV)=1$. Also, a graphical description for the entries of $A^{\#}$ such that $D(A)$ is a broom tree was derived in \cite{mdp}. In \cite{pavsi}, the authors gave a graphical description for entries of the adjacency matrix of arbitrary weighted trees in terms of maximum matchings and alternating paths. Group inverses matrices associated with cycle graphs were investigated in \cite{mona,maca,sn,sivakumar}.

Now, we recall some graph-theoretic notations coined from \cite{cov}. Let $A=(a_{ij})\in \mathbb{R}^{n\times n}$. Then, the digraph corresponding to $A$, denoted by $D(A)=(V,E)$, is the directed graph whose vertex set is $V=\{1,2, \ldots ,n\}$ and the edge set $E$ is defined as follows: $(i,j)\in E$ iff $a_{ij}\neq 0$. A sequence $(i_1,i_2,\ldots ,i_m,i_{m+1})$ of $m+1$ distinct vertices with edges $(i_1,i_2),(i_2,i_3),\ldots ,(i_m,i_{m+1})$ is called a path of length $m$ from vertex $i_1$ to vertex $i_{m+1}$ in $D(A)$. When the first and last vertex of the above sequence are the same, i.e. $i_1=i_{m+1}$, it is called a $m$-$cycle$ (a cycle of length $m$) in $D(A)$. 

A digraph $D$ is called a {\it simple symmetric digraph} if it has no loops and there is an edge in each direction between two distinct vertices. Recall that a digraph that has a path from each vertex to every other vertex is called {\it strongly connected}. A digraph is said to be {\it tree} if it is a strongly connected digraph, and all of its cycles have length 2. Clearly, a tree digraph is a simple symmetric digraph. Suppose $A$ is a real square matrix such that $D(A)$ is a simple symmetric digraph. A 2-cycle $(i,j,i)$ is said to be incident to $i$ as well as $j$ in $D(A)$, and the vertex $i$ is called a pendant vertex if it is incident to only one 2-cycle. Otherwise, we will call $i$ a non-pendant vertex. On the other hand, a 2-cycle $(i,j,i)$ will be called a pendant cycle if at least one vertex $i$ or $j$ is pendant in $D(A)$, while a 2-cycle which is not pendant will be called a non-pendant cycle. A pair of vertices $i,j$ is said to be adjacent to each other if there is a 2-cycle $(i,j,i)$ in $D(A)$.

Now, extending the notations in \cite{nandi}, for a simple symmetric digraph $D(A)$, recall that for an even $r$, a set of $\frac{r}{2}$ disjoint 2-cycles in $D(A)$ given by $\{(i_1,i_2,i_1),(i_3,i_4,i_3),\ldots,(i_{r-1},i_r,i_{r-1})\},$  is called a {\it matching} and the product $a_{i_1,i_2}a_{i_2,i_1}a_{i_3,i_4}a_{i_4,i_3}\ldots a_{i_{r-1},i_r}a_{i_r,i_{r-1}}$ is called a {\it matching product}. If this set of 2-cycles has a maximum cardinality then the {\it matching} is referred to as a {\it maximum matching} and the matching product is called a {\it maximum matching product}. The sum of all maximum matching products in $D(A)$ is denoted by $\Delta_A$. A matching is said to be a {\it perfect matching} if it covers all the vertices of $D(A)$. Let $\mathbb{M}$ and $\mathbb{M}(i)$ denote the set of all maximum matchings in $D(A)$ and the set of all maximum matchings in which vertex $i$ is matched, respectively. For a cycle $(i,j,i)$ in $D(A)$, the product $a_{ij}a_{ji}$ is called the {\it cycle product}. A sequence of $m$ 2-cycles $((i_1,i_2,i_1),(i_2,i_3,i_2),\ldots,(i_m,i_{m+1},i_m))$ with $m+1$ distinct vertices $i_1,i_2,\ldots ,i_{m+1}$ in $D(A)$ is called a {\it cycle chain} from $i_1$ to $i_{m+1}$ of length $m$ and denoted by $C_m(i_1,i_{m+1})$. A cycle chain $C_m(i_1,i_{m+1})$ is said to be an alternating cycle chain with respect to a maximum matching $M$ if cycles of $C_m(i_1,i_{m+1})$ alternatively belong to $M$ and $M^c$, with the condition that both the first and the last cycle of $C_m(i_1,i_{m+1})$ belong to $M$. For an alternating cycle chain $C_m(i_1,i_{m+1})$, the product $a_{i_1,i_2}a_{i_2,i_3}a_{i_3,i_4} \ldots a_{i_{m-1},i_m}a_{i_m,i_{m+1}}$ is said to be the {\it path product along alternating cycle chain $C_m(i_1,i_{m+1})$}, denoted by $P_m(i_1,i_{m+1})$.

Let us introduce a new class of simple symmetric digraph.
\begin{defn}
Let $\mathcal{D}$ denote the set of all simple symmetric digraphs $D$  such that each non-pendant vertex of $D$ is adjacent to at least one pendant vertex of $D$.
\end{defn}
\begin{ex}
It is clear that the digraph $D_1 \in \mathcal{D},$ (Fig. \ref{D1}) while $D_2 \notin \mathcal{D}$ (Fig. \ref{D2}). The non-pendant vertex $4$ (in $D_2$) is not adjacent to any pendant vertex.
\begin{figure}[H]

\tikzset{every picture/.style={line width=0.75pt}} 

\begin{tikzpicture}[x=0.75pt,y=0.75pt,yscale=-1,xscale=1]

\draw   (257,118.5) .. controls (257,114.91) and (259.91,112) .. (263.5,112) .. controls (267.09,112) and (270,114.91) .. (270,118.5) .. controls (270,122.09) and (267.09,125) .. (263.5,125) .. controls (259.91,125) and (257,122.09) .. (257,118.5) -- cycle ;
\draw   (347,118.5) .. controls (347,114.91) and (349.91,112) .. (353.5,112) .. controls (357.09,112) and (360,114.91) .. (360,118.5) .. controls (360,122.09) and (357.09,125) .. (353.5,125) .. controls (349.91,125) and (347,122.09) .. (347,118.5) -- cycle ;
\draw   (256,199.5) .. controls (256,195.91) and (258.91,193) .. (262.5,193) .. controls (266.09,193) and (269,195.91) .. (269,199.5) .. controls (269,203.09) and (266.09,206) .. (262.5,206) .. controls (258.91,206) and (256,203.09) .. (256,199.5) -- cycle ;
\draw   (347,200.5) .. controls (347,196.91) and (349.91,194) .. (353.5,194) .. controls (357.09,194) and (360,196.91) .. (360,200.5) .. controls (360,204.09) and (357.09,207) .. (353.5,207) .. controls (349.91,207) and (347,204.09) .. (347,200.5) -- cycle ;
\draw    (270,118.5) .. controls (299.76,102.66) and (330.44,111.24) .. (345.23,117.71) ;
\draw [shift={(347,118.5)}, rotate = 204.9] [color={rgb, 255:red, 0; green, 0; blue, 0 }  ][line width=0.75]    (10.93,-3.29) .. controls (6.95,-1.4) and (3.31,-0.3) .. (0,0) .. controls (3.31,0.3) and (6.95,1.4) .. (10.93,3.29)   ;
\draw    (353.5,194) .. controls (365.63,166.84) and (363.64,144.38) .. (354.38,126.63) ;
\draw [shift={(353.5,125)}, rotate = 60.95] [color={rgb, 255:red, 0; green, 0; blue, 0 }  ][line width=0.75]    (10.93,-3.29) .. controls (6.95,-1.4) and (3.31,-0.3) .. (0,0) .. controls (3.31,0.3) and (6.95,1.4) .. (10.93,3.29)   ;
\draw    (263.5,194) .. controls (272.72,166.84) and (273.46,144.38) .. (264.37,126.63) ;
\draw [shift={(263.5,125)}, rotate = 60.95] [color={rgb, 255:red, 0; green, 0; blue, 0 }  ][line width=0.75]    (10.93,-3.29) .. controls (6.95,-1.4) and (3.31,-0.3) .. (0,0) .. controls (3.31,0.3) and (6.95,1.4) .. (10.93,3.29)   ;
\draw    (269,199.5) .. controls (296.99,181.17) and (328.69,190.77) .. (344.36,198.65) ;
\draw [shift={(346,199.5)}, rotate = 208.07] [color={rgb, 255:red, 0; green, 0; blue, 0 }  ][line width=0.75]    (10.93,-3.29) .. controls (6.95,-1.4) and (3.31,-0.3) .. (0,0) .. controls (3.31,0.3) and (6.95,1.4) .. (10.93,3.29)   ;
\draw    (263.5,125) .. controls (253.32,142.46) and (253.95,173.56) .. (261.76,191.39) ;
\draw [shift={(262.5,193)}, rotate = 244.09] [color={rgb, 255:red, 0; green, 0; blue, 0 }  ][line width=0.75]    (10.93,-3.29) .. controls (6.95,-1.4) and (3.31,-0.3) .. (0,0) .. controls (3.31,0.3) and (6.95,1.4) .. (10.93,3.29)   ;
\draw    (347,200.5) .. controls (315.15,212.08) and (287.97,206.9) .. (270.83,200.23) ;
\draw [shift={(269,199.5)}, rotate = 22.38] [color={rgb, 255:red, 0; green, 0; blue, 0 }  ][line width=0.75]    (10.93,-3.29) .. controls (6.95,-1.4) and (3.31,-0.3) .. (0,0) .. controls (3.31,0.3) and (6.95,1.4) .. (10.93,3.29)   ;
\draw   (255,274.5) .. controls (255,270.91) and (257.91,268) .. (261.5,268) .. controls (265.09,268) and (268,270.91) .. (268,274.5) .. controls (268,278.09) and (265.09,281) .. (261.5,281) .. controls (257.91,281) and (255,278.09) .. (255,274.5) -- cycle ;
\draw   (176,200.5) .. controls (176,196.91) and (178.91,194) .. (182.5,194) .. controls (186.09,194) and (189,196.91) .. (189,200.5) .. controls (189,204.09) and (186.09,207) .. (182.5,207) .. controls (178.91,207) and (176,204.09) .. (176,200.5) -- cycle ;
\draw    (257.5,113.5) .. controls (249.66,97.82) and (246.14,80.22) .. (253.53,53.17) ;
\draw [shift={(254,51.5)}, rotate = 105.95] [color={rgb, 255:red, 0; green, 0; blue, 0 }  ][line width=0.75]    (10.93,-3.29) .. controls (6.95,-1.4) and (3.31,-0.3) .. (0,0) .. controls (3.31,0.3) and (6.95,1.4) .. (10.93,3.29)   ;
\draw   (253,45.5) .. controls (253,41.91) and (255.91,39) .. (259.5,39) .. controls (263.09,39) and (266,41.91) .. (266,45.5) .. controls (266,49.09) and (263.09,52) .. (259.5,52) .. controls (255.91,52) and (253,49.09) .. (253,45.5) -- cycle ;
\draw    (261.5,268) .. controls (255.16,250.94) and (253.1,229.6) .. (261.81,207.69) ;
\draw [shift={(262.5,206)}, rotate = 112.89] [color={rgb, 255:red, 0; green, 0; blue, 0 }  ][line width=0.75]    (10.93,-3.29) .. controls (6.95,-1.4) and (3.31,-0.3) .. (0,0) .. controls (3.31,0.3) and (6.95,1.4) .. (10.93,3.29)   ;
\draw    (352.5,269) .. controls (346.16,251.94) and (344.1,230.6) .. (352.81,208.69) ;
\draw [shift={(353.5,207)}, rotate = 112.89] [color={rgb, 255:red, 0; green, 0; blue, 0 }  ][line width=0.75]    (10.93,-3.29) .. controls (6.95,-1.4) and (3.31,-0.3) .. (0,0) .. controls (3.31,0.3) and (6.95,1.4) .. (10.93,3.29)   ;
\draw   (346,275.5) .. controls (346,271.91) and (348.91,269) .. (352.5,269) .. controls (356.09,269) and (359,271.91) .. (359,275.5) .. controls (359,279.09) and (356.09,282) .. (352.5,282) .. controls (348.91,282) and (346,279.09) .. (346,275.5) -- cycle ;
\draw    (190,200.5) .. controls (218.95,186.02) and (238.59,191.11) .. (255.2,198.67) ;
\draw [shift={(257,199.5)}, rotate = 205.2] [color={rgb, 255:red, 0; green, 0; blue, 0 }  ][line width=0.75]    (10.93,-3.29) .. controls (6.95,-1.4) and (3.31,-0.3) .. (0,0) .. controls (3.31,0.3) and (6.95,1.4) .. (10.93,3.29)   ;
\draw    (261.5,53) .. controls (268.85,69.17) and (271.88,83.9) .. (264,110.36) ;
\draw [shift={(263.5,112)}, rotate = 287.18] [color={rgb, 255:red, 0; green, 0; blue, 0 }  ][line width=0.75]    (10.93,-3.29) .. controls (6.95,-1.4) and (3.31,-0.3) .. (0,0) .. controls (3.31,0.3) and (6.95,1.4) .. (10.93,3.29)   ;
\draw    (358.5,206) .. controls (365.85,222.17) and (366.48,241.22) .. (358.5,267.86) ;
\draw [shift={(358,269.5)}, rotate = 287.18] [color={rgb, 255:red, 0; green, 0; blue, 0 }  ][line width=0.75]    (10.93,-3.29) .. controls (6.95,-1.4) and (3.31,-0.3) .. (0,0) .. controls (3.31,0.3) and (6.95,1.4) .. (10.93,3.29)   ;
\draw    (267.5,205) .. controls (274.85,221.17) and (275.48,240.22) .. (267.5,266.86) ;
\draw [shift={(267,268.5)}, rotate = 287.18] [color={rgb, 255:red, 0; green, 0; blue, 0 }  ][line width=0.75]    (10.93,-3.29) .. controls (6.95,-1.4) and (3.31,-0.3) .. (0,0) .. controls (3.31,0.3) and (6.95,1.4) .. (10.93,3.29)   ;
\draw    (256,199.5) .. controls (239.43,207.3) and (213.34,209.4) .. (191.66,201.15) ;
\draw [shift={(190,200.5)}, rotate = 22.25] [color={rgb, 255:red, 0; green, 0; blue, 0 }  ][line width=0.75]    (10.93,-3.29) .. controls (6.95,-1.4) and (3.31,-0.3) .. (0,0) .. controls (3.31,0.3) and (6.95,1.4) .. (10.93,3.29)   ;
\draw    (355.5,110.51) .. controls (363.79,95.39) and (365.9,77.9) .. (357.19,53.86) ;
\draw [shift={(356.49,52)}, rotate = 69.18] [color={rgb, 255:red, 0; green, 0; blue, 0 }  ][line width=0.75]    (10.93,-3.29) .. controls (6.95,-1.4) and (3.31,-0.3) .. (0,0) .. controls (3.31,0.3) and (6.95,1.4) .. (10.93,3.29)   ;
\draw    (354,123.5) .. controls (344.2,147.02) and (344.96,165.26) .. (353,192.33) ;
\draw [shift={(353.5,194)}, rotate = 253.11] [color={rgb, 255:red, 0; green, 0; blue, 0 }  ][line width=0.75]    (10.93,-3.29) .. controls (6.95,-1.4) and (3.31,-0.3) .. (0,0) .. controls (3.31,0.3) and (6.95,1.4) .. (10.93,3.29)   ;
\draw    (346.5,118.5) .. controls (326.41,125.85) and (299.12,128.88) .. (271.68,119.11) ;
\draw [shift={(270,118.5)}, rotate = 20.56] [color={rgb, 255:red, 0; green, 0; blue, 0 }  ][line width=0.75]    (10.93,-3.29) .. controls (6.95,-1.4) and (3.31,-0.3) .. (0,0) .. controls (3.31,0.3) and (6.95,1.4) .. (10.93,3.29)   ;
\draw   (350,45.49) .. controls (350,41.9) and (352.92,39) .. (356.51,39) .. controls (360.1,39) and (363,41.92) .. (363,45.51) .. controls (363,49.1) and (360.08,52) .. (356.49,52) .. controls (352.9,52) and (350,49.08) .. (350,45.49) -- cycle ;
\draw    (356.49,52) .. controls (348.17,66.7) and (344.16,85.24) .. (352.95,110.45) ;
\draw [shift={(353.5,112)}, rotate = 249.93] [color={rgb, 255:red, 0; green, 0; blue, 0 }  ][line width=0.75]    (10.93,-3.29) .. controls (6.95,-1.4) and (3.31,-0.3) .. (0,0) .. controls (3.31,0.3) and (6.95,1.4) .. (10.93,3.29)   ;
\draw    (360,118.5) .. controls (388.65,105.61) and (406.36,113.24) .. (420.08,117.43) ;
\draw [shift={(422,118)}, rotate = 195.95] [color={rgb, 255:red, 0; green, 0; blue, 0 }  ][line width=0.75]    (10.93,-3.29) .. controls (6.95,-1.4) and (3.31,-0.3) .. (0,0) .. controls (3.31,0.3) and (6.95,1.4) .. (10.93,3.29)   ;
\draw   (422,118) .. controls (422,114.41) and (424.91,111.5) .. (428.5,111.5) .. controls (432.09,111.5) and (435,114.41) .. (435,118) .. controls (435,121.59) and (432.09,124.5) .. (428.5,124.5) .. controls (424.91,124.5) and (422,121.59) .. (422,118) -- cycle ;
\draw    (422,118) .. controls (412.25,124.83) and (381.59,130.7) .. (361.52,119.4) ;
\draw [shift={(360,118.5)}, rotate = 32.01] [color={rgb, 255:red, 0; green, 0; blue, 0 }  ][line width=0.75]    (10.93,-3.29) .. controls (6.95,-1.4) and (3.31,-0.3) .. (0,0) .. controls (3.31,0.3) and (6.95,1.4) .. (10.93,3.29)   ;

\draw (241,114.4) node [anchor=north west][inner sep=0.75pt]    {$2$};
\draw (367,130.9) node [anchor=north west][inner sep=0.75pt]    {$1$};
\draw (364,186.4) node [anchor=north west][inner sep=0.75pt]    {$4$};
\draw (243,174.4) node [anchor=north west][inner sep=0.75pt]    {$3$};
\draw (255,17.4) node [anchor=north west][inner sep=0.75pt]    {$7$};
\draw (342,286.4) node [anchor=north west][inner sep=0.75pt]    {$10$};
\draw (252,286.4) node [anchor=north west][inner sep=0.75pt]    {$9$};
\draw (158,194.4) node [anchor=north west][inner sep=0.75pt]    {$8$};
\draw (350,19.4) node [anchor=north west][inner sep=0.75pt]    {$5$};
\draw (441,109.4) node [anchor=north west][inner sep=0.75pt]    {$6$};

\end{tikzpicture}

\caption{$D_1$}
\label{D1}
\end{figure}
\begin{figure}[H]

\tikzset{every picture/.style={line width=0.75pt}} 

\begin{tikzpicture}[x=0.75pt,y=0.75pt,yscale=-1,xscale=1]

\draw   (257,118.5) .. controls (257,114.91) and (259.91,112) .. (263.5,112) .. controls (267.09,112) and (270,114.91) .. (270,118.5) .. controls (270,122.09) and (267.09,125) .. (263.5,125) .. controls (259.91,125) and (257,122.09) .. (257,118.5) -- cycle ;
\draw   (347,118.5) .. controls (347,114.91) and (349.91,112) .. (353.5,112) .. controls (357.09,112) and (360,114.91) .. (360,118.5) .. controls (360,122.09) and (357.09,125) .. (353.5,125) .. controls (349.91,125) and (347,122.09) .. (347,118.5) -- cycle ;
\draw   (256,199.5) .. controls (256,195.91) and (258.91,193) .. (262.5,193) .. controls (266.09,193) and (269,195.91) .. (269,199.5) .. controls (269,203.09) and (266.09,206) .. (262.5,206) .. controls (258.91,206) and (256,203.09) .. (256,199.5) -- cycle ;
\draw   (347,200.5) .. controls (347,196.91) and (349.91,194) .. (353.5,194) .. controls (357.09,194) and (360,196.91) .. (360,200.5) .. controls (360,204.09) and (357.09,207) .. (353.5,207) .. controls (349.91,207) and (347,204.09) .. (347,200.5) -- cycle ;
\draw    (270,118.5) .. controls (299.76,102.66) and (330.44,111.24) .. (345.23,117.71) ;
\draw [shift={(347,118.5)}, rotate = 204.9] [color={rgb, 255:red, 0; green, 0; blue, 0 }  ][line width=0.75]    (10.93,-3.29) .. controls (6.95,-1.4) and (3.31,-0.3) .. (0,0) .. controls (3.31,0.3) and (6.95,1.4) .. (10.93,3.29)   ;
\draw    (353.5,194) .. controls (365.63,166.84) and (363.64,144.38) .. (354.38,126.63) ;
\draw [shift={(353.5,125)}, rotate = 60.95] [color={rgb, 255:red, 0; green, 0; blue, 0 }  ][line width=0.75]    (10.93,-3.29) .. controls (6.95,-1.4) and (3.31,-0.3) .. (0,0) .. controls (3.31,0.3) and (6.95,1.4) .. (10.93,3.29)   ;
\draw    (263.5,194) .. controls (272.72,166.84) and (273.46,144.38) .. (264.37,126.63) ;
\draw [shift={(263.5,125)}, rotate = 60.95] [color={rgb, 255:red, 0; green, 0; blue, 0 }  ][line width=0.75]    (10.93,-3.29) .. controls (6.95,-1.4) and (3.31,-0.3) .. (0,0) .. controls (3.31,0.3) and (6.95,1.4) .. (10.93,3.29)   ;
\draw    (269,199.5) .. controls (296.99,181.17) and (328.69,190.77) .. (344.36,198.65) ;
\draw [shift={(346,199.5)}, rotate = 208.07] [color={rgb, 255:red, 0; green, 0; blue, 0 }  ][line width=0.75]    (10.93,-3.29) .. controls (6.95,-1.4) and (3.31,-0.3) .. (0,0) .. controls (3.31,0.3) and (6.95,1.4) .. (10.93,3.29)   ;
\draw    (263.5,125) .. controls (253.32,142.46) and (253.95,173.56) .. (261.76,191.39) ;
\draw [shift={(262.5,193)}, rotate = 244.09] [color={rgb, 255:red, 0; green, 0; blue, 0 }  ][line width=0.75]    (10.93,-3.29) .. controls (6.95,-1.4) and (3.31,-0.3) .. (0,0) .. controls (3.31,0.3) and (6.95,1.4) .. (10.93,3.29)   ;
\draw    (347,200.5) .. controls (315.15,212.08) and (287.97,206.9) .. (270.83,200.23) ;
\draw [shift={(269,199.5)}, rotate = 22.38] [color={rgb, 255:red, 0; green, 0; blue, 0 }  ][line width=0.75]    (10.93,-3.29) .. controls (6.95,-1.4) and (3.31,-0.3) .. (0,0) .. controls (3.31,0.3) and (6.95,1.4) .. (10.93,3.29)   ;
\draw   (255,274.5) .. controls (255,270.91) and (257.91,268) .. (261.5,268) .. controls (265.09,268) and (268,270.91) .. (268,274.5) .. controls (268,278.09) and (265.09,281) .. (261.5,281) .. controls (257.91,281) and (255,278.09) .. (255,274.5) -- cycle ;
\draw   (176,200.5) .. controls (176,196.91) and (178.91,194) .. (182.5,194) .. controls (186.09,194) and (189,196.91) .. (189,200.5) .. controls (189,204.09) and (186.09,207) .. (182.5,207) .. controls (178.91,207) and (176,204.09) .. (176,200.5) -- cycle ;
\draw    (257.5,113.5) .. controls (249.66,97.82) and (246.14,80.22) .. (253.53,53.17) ;
\draw [shift={(254,51.5)}, rotate = 105.95] [color={rgb, 255:red, 0; green, 0; blue, 0 }  ][line width=0.75]    (10.93,-3.29) .. controls (6.95,-1.4) and (3.31,-0.3) .. (0,0) .. controls (3.31,0.3) and (6.95,1.4) .. (10.93,3.29)   ;
\draw   (253,45.5) .. controls (253,41.91) and (255.91,39) .. (259.5,39) .. controls (263.09,39) and (266,41.91) .. (266,45.5) .. controls (266,49.09) and (263.09,52) .. (259.5,52) .. controls (255.91,52) and (253,49.09) .. (253,45.5) -- cycle ;
\draw    (261.5,268) .. controls (255.16,250.94) and (253.1,229.6) .. (261.81,207.69) ;
\draw [shift={(262.5,206)}, rotate = 112.89] [color={rgb, 255:red, 0; green, 0; blue, 0 }  ][line width=0.75]    (10.93,-3.29) .. controls (6.95,-1.4) and (3.31,-0.3) .. (0,0) .. controls (3.31,0.3) and (6.95,1.4) .. (10.93,3.29)   ;
\draw    (190,200.5) .. controls (218.95,186.02) and (238.59,191.11) .. (255.2,198.67) ;
\draw [shift={(257,199.5)}, rotate = 205.2] [color={rgb, 255:red, 0; green, 0; blue, 0 }  ][line width=0.75]    (10.93,-3.29) .. controls (6.95,-1.4) and (3.31,-0.3) .. (0,0) .. controls (3.31,0.3) and (6.95,1.4) .. (10.93,3.29)   ;
\draw    (261.5,53) .. controls (268.85,69.17) and (271.88,83.9) .. (264,110.36) ;
\draw [shift={(263.5,112)}, rotate = 287.18] [color={rgb, 255:red, 0; green, 0; blue, 0 }  ][line width=0.75]    (10.93,-3.29) .. controls (6.95,-1.4) and (3.31,-0.3) .. (0,0) .. controls (3.31,0.3) and (6.95,1.4) .. (10.93,3.29)   ;
\draw    (267.5,205) .. controls (274.85,221.17) and (275.48,240.22) .. (267.5,266.86) ;
\draw [shift={(267,268.5)}, rotate = 287.18] [color={rgb, 255:red, 0; green, 0; blue, 0 }  ][line width=0.75]    (10.93,-3.29) .. controls (6.95,-1.4) and (3.31,-0.3) .. (0,0) .. controls (3.31,0.3) and (6.95,1.4) .. (10.93,3.29)   ;
\draw    (256,199.5) .. controls (239.43,207.3) and (213.34,209.4) .. (191.66,201.15) ;
\draw [shift={(190,200.5)}, rotate = 22.25] [color={rgb, 255:red, 0; green, 0; blue, 0 }  ][line width=0.75]    (10.93,-3.29) .. controls (6.95,-1.4) and (3.31,-0.3) .. (0,0) .. controls (3.31,0.3) and (6.95,1.4) .. (10.93,3.29)   ;
\draw    (355.5,110.51) .. controls (363.79,95.39) and (365.9,77.9) .. (357.19,53.86) ;
\draw [shift={(356.49,52)}, rotate = 69.18] [color={rgb, 255:red, 0; green, 0; blue, 0 }  ][line width=0.75]    (10.93,-3.29) .. controls (6.95,-1.4) and (3.31,-0.3) .. (0,0) .. controls (3.31,0.3) and (6.95,1.4) .. (10.93,3.29)   ;
\draw    (354,123.5) .. controls (344.2,147.02) and (344.96,165.26) .. (353,192.33) ;
\draw [shift={(353.5,194)}, rotate = 253.11] [color={rgb, 255:red, 0; green, 0; blue, 0 }  ][line width=0.75]    (10.93,-3.29) .. controls (6.95,-1.4) and (3.31,-0.3) .. (0,0) .. controls (3.31,0.3) and (6.95,1.4) .. (10.93,3.29)   ;
\draw    (346.5,118.5) .. controls (326.41,125.85) and (299.12,128.88) .. (271.68,119.11) ;
\draw [shift={(270,118.5)}, rotate = 20.56] [color={rgb, 255:red, 0; green, 0; blue, 0 }  ][line width=0.75]    (10.93,-3.29) .. controls (6.95,-1.4) and (3.31,-0.3) .. (0,0) .. controls (3.31,0.3) and (6.95,1.4) .. (10.93,3.29)   ;
\draw   (350,45.49) .. controls (350,41.9) and (352.92,39) .. (356.51,39) .. controls (360.1,39) and (363,41.92) .. (363,45.51) .. controls (363,49.1) and (360.08,52) .. (356.49,52) .. controls (352.9,52) and (350,49.08) .. (350,45.49) -- cycle ;
\draw    (356.49,52) .. controls (348.17,66.7) and (344.16,85.24) .. (352.95,110.45) ;
\draw [shift={(353.5,112)}, rotate = 249.93] [color={rgb, 255:red, 0; green, 0; blue, 0 }  ][line width=0.75]    (10.93,-3.29) .. controls (6.95,-1.4) and (3.31,-0.3) .. (0,0) .. controls (3.31,0.3) and (6.95,1.4) .. (10.93,3.29)   ;
\draw    (360,118.5) .. controls (388.65,105.61) and (406.36,113.24) .. (420.08,117.43) ;
\draw [shift={(422,118)}, rotate = 195.95] [color={rgb, 255:red, 0; green, 0; blue, 0 }  ][line width=0.75]    (10.93,-3.29) .. controls (6.95,-1.4) and (3.31,-0.3) .. (0,0) .. controls (3.31,0.3) and (6.95,1.4) .. (10.93,3.29)   ;
\draw   (422,118) .. controls (422,114.41) and (424.91,111.5) .. (428.5,111.5) .. controls (432.09,111.5) and (435,114.41) .. (435,118) .. controls (435,121.59) and (432.09,124.5) .. (428.5,124.5) .. controls (424.91,124.5) and (422,121.59) .. (422,118) -- cycle ;
\draw    (422,118) .. controls (412.25,124.83) and (381.59,130.7) .. (361.52,119.4) ;
\draw [shift={(360,118.5)}, rotate = 32.01] [color={rgb, 255:red, 0; green, 0; blue, 0 }  ][line width=0.75]    (10.93,-3.29) .. controls (6.95,-1.4) and (3.31,-0.3) .. (0,0) .. controls (3.31,0.3) and (6.95,1.4) .. (10.93,3.29)   ;

\draw (241,114.4) node [anchor=north west][inner sep=0.75pt]    {$2$};
\draw (367,130.9) node [anchor=north west][inner sep=0.75pt]    {$1$};
\draw (364,186.4) node [anchor=north west][inner sep=0.75pt]    {$4$};
\draw (243,174.4) node [anchor=north west][inner sep=0.75pt]    {$3$};
\draw (255,17.4) node [anchor=north west][inner sep=0.75pt]    {$7$};
\draw (252,286.4) node [anchor=north west][inner sep=0.75pt]    {$9$};
\draw (158,194.4) node [anchor=north west][inner sep=0.75pt]    {$8$};
\draw (350,19.4) node [anchor=north west][inner sep=0.75pt]    {$5$};
\draw (441,109.4) node [anchor=north west][inner sep=0.75pt]    {$6$};

\end{tikzpicture}
\caption{$D_2$}
\label{D2}
\end{figure}
\end{ex}
In another way, the property of a digraph $D$ in $\mathcal{D}$ can be written as follows: If $P\subset V(D)$ is the set of pendant vertices, then $\delta (P)=V(D)\backslash P$ or in other words, $P$ is a dense subset of $V(D)$. Sometimes, this property is referred to as the set $P$ dominates $V(D)$. Remember that for a set $F\subset V(D)$ we have $\overline{F}=F\cup \delta (F)$, where $\delta (F)=\{x\in V(D)\backslash F: x \mbox{ is adjacent to } y \mbox{ for some } y\in F\}$.

Let us define more terminology for a strongly connected digraph $D(A)\in \mathcal{D}$, where $A$ is a real square matrix. In particular, when $D(A)$ is a tree digraph, these terminologies are defined in \cite{nandi}. Now, we are extending those terminologies for a simple symmetric digraph $D(A)\in \mathcal{D}$. For arbitrary vertices $i$ and $j$ in $D(A)$ denote $\mathbb{M}(i,j)$ to be the set of all maximum matchings $M$ in $D(A)$ such that $C_m(i,j)$ is an alternating cycle chain with respect to $M$. Clearly, $\mathbb{M}(i,j)=\mathbb{M}(j,i)$. A necessary condition for the set $\mathbb{M}(i,j)$ to be non-empty is that the length of the path from $i$ to $j$ be odd. If $(i,j,i)$ is a 2-cycle of some maximum matching, then $\mathbb{M}(i,j)$ is non-empty. Two distinct vertices $i$ and $j$ will be called {\it maximally matchable} if $\mathbb{M}(i,j)\neq \phi$ and by Proposition \ref{uniquealternating}, for a maximally matchable vertices $i,j$, $C_m(i,j)$ is unique.

Further, for any maximally matchable vertices $i,j$ and a maximum matching $M\in \mathbb{M}(i,j)$ let $\beta _{\overline{i,j}}(M)$ denote the product of all cycle product, ranging over all the cycles of $M$ that is not contained in the unique cycle chain $C_m(i,j)$ in $D(A)$ (product over an empty set is considered to be equal to 1). Since $\mathbb{M}(i,j)=\mathbb{M}(j,i)$, note that $\beta _{\overline{i,j}}(M)=\beta _{\overline{j,i}}(M)$. For a maximum matching $M$ in $D(A)$, $\eta (M)$ denotes the maximum matching product. Set 
\begin{center}
$\beta _{ij}=\left\{\begin{array}{cl} (-1)^{\frac{m-1}{2}}P_m(i,j)&\mbox{if }i,j\mbox{ are maximally matchable, }\\0&\mbox{if }i,j\mbox{ are not maximally matchable. }\end{array}\right.$    
\end{center}
and
\begin{equation}\label{mu}
\mu _{ij}=\beta _{ij} \cdot \sum _{M\in \mathbb{M}(i,j)}\beta _{\overline{i,j}}(M).   
\end{equation}
It follows $\mu _{ij}=0$ if $i,j$ are not maximally matchable. This includes the case $i=j$.

In this terminology, a formula for the entries of the group inverse of a matrix $A$ with tree digraph $D(A)\in \mathcal{D}$, proved in \cite{nandi}, is recalled next.
\begin{thm}\cite[Theorem 1.4]{nandi}\label{treeresult}
Let $A$ be an $n\times n$ real matrix with a tree digraph $D(A)\in \mathcal{D}$ and assume that $\Delta _A \neq 0$. Let $A^{\#}=(\alpha _{ij})$ and let $\mu _{ij}$ be defined as above. Then, $\alpha _{ij}=\frac{\mu _{ij}}{\Delta _A}$.
\end{thm}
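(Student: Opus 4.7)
The plan is to reduce the computation of $A^\#$ to a block bipartite form and then extract the entries combinatorially using the tree structure. Since $D(A)$ is a tree, the underlying undirected graph is bipartite, so by simultaneously permuting rows and columns along the natural $2$-coloring, one can write
\[
A = \begin{pmatrix} 0 & B \\ C & 0 \end{pmatrix}.
\]
The block formula for the group inverse of such bipartite matrices, recalled in Theorem~2.2 of \cite{cov}, expresses $A^\#$ block-wise in terms of $B$ and $C$. Existence of $A^\#$, i.e.\ $\operatorname{rank}(A) = \operatorname{rank}(A^2)$, should follow from $\Delta_A \neq 0$, since $\Delta_A$ agrees up to sign with a leading matching-theoretic invariant of the principal submatrix $BC$.

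Once the block formula is in hand, the central step is an entry-wise combinatorial computation: each entry of $A^\#$ can be written as a ratio of signed sums of matching products on certain principal subgraphs of $D(A)$. In a tree, only fixed-point-free involutions contribute to the signed permutation expansion of a determinant, so these determinants collapse to sums over perfect matchings of the relevant subgraph. The numerator isolates exactly the (unique) $i$-to-$j$ path in the tree, which realizes the alternating cycle chain $C_m(i,j)$ contributing $\beta_{ij}$, while the matchings on the complement reproduce the sum $\sum_{M \in \mathbb{M}(i,j)} \beta_{\overline{i,j}}(M)$, together yielding $\mu_{ij}/\Delta_A$.

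The hypothesis $D(A)\in \mathcal{D}$ enters crucially to ensure that the off-chain components of $D(A)$ admit maximum matchings of the expected size: because every non-pendant vertex is adjacent to at least one pendant, deleting the chain $C_m(i,j)$ cannot strand a non-pendant vertex without an available partner. The main obstacle I expect is sign-tracking: the factor $(-1)^{(m-1)/2}$ in $\beta_{ij}$ must be shown to absorb precisely the combined sign contributions from the signed permutation expansion, the orientation of the chain of $2$-cycles, and the block inversion formula. Once these signs are reconciled, verification of the three defining identities $AXA = A$, $XAX = X$, and $AX = XA$ for $X = (\mu_{ij}/\Delta_A)$ can be read off directly from the block bipartite formula, completing the proof. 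As a sanity check, symmetry of the formula under $i \leftrightarrow j$ and the identities $\mathbb{M}(i,j) = \mathbb{M}(j,i)$, $\beta_{\overline{i,j}}(M) = \beta_{\overline{j,i}}(M)$ give $AX = XA$ essentially for free.
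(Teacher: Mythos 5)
There is a genuine gap. The theorem as stated is recalled from \cite[Theorem 1.4]{nandi}, and the proof there (mirrored by the present paper's generalization in Section \ref{section2}) proceeds by taking the candidate matrix $X=(\mu_{ij}/\Delta_A)$ and \emph{directly verifying} the three defining identities $AX=XA$, $AXA=A$, $XAX=X$ through a case analysis on pendant versus non-pendant vertices, common neighbours, and the (unique, length at most three) alternating cycle chains. Your plan instead routes through the bipartite block form $\left(\begin{smallmatrix}0&B\\C&0\end{smallmatrix}\right)$ and the formula of \cite[Theorem 2.2]{cov}, but this is where the argument breaks down: that formula expresses $A^{\#}$ in terms of $(BC)^{\#}$ and $(CB)^{\#}$, and since a tree in $\mathcal{D}$ generally has $\mathrm{rank}(A)=2k<n$ ($k$ the number of non-pendant vertices), the products $BC$ and $CB$ are singular. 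Consequently there is no cofactor or determinant formula for their group-inverse entries, and your key step --- ``these determinants collapse to sums over perfect matchings'' --- has nothing to act on: the permutation-expansion-collapses-to-matchings argument applies to determinants of principal submatrices of $A$, not to entries of the group inverse of the singular product $BC$. Extracting $\mu_{ij}/\Delta_A$ from $(BC)^{\#}$ combinatorially is essentially the entire content of the theorem, and you have not supplied it.

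Two further points compound the gap. First, your closing claim that the identities $AXA=A$, $XAX=X$, $AX=XA$ for $X=(\mu_{ij}/\Delta_A)$ ``can be read off directly from the block bipartite formula'' is circular: verifying those identities is precisely what the actual proof consists of, and it requires the full case analysis (e.g.\ showing $(AX)_{ij}=0$ unless $i,j$ are pendant with a common neighbour, and the cancellation in Subcase (iii.2.a) of Lemma \ref{1}). Only the observation that $AX=XA$ follows from $\mathbb{M}(i,j)=\mathbb{M}(j,i)$ and $\beta_{\overline{i,j}}(M)=\beta_{\overline{j,i}}(M)$ is on target, and even that handles only the diagonal comparison cleanly. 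Second, you correctly flag the sign bookkeeping for $(-1)^{(m-1)/2}$ as an obstacle but leave it unresolved; in the verification approach this sign emerges naturally from the alternation of matched and unmatched $2$-cycles along $C_m(i,j)$, whereas in your determinantal route it would have to be reconciled with permutation signs that you never compute.
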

In \cite{cov}, authors presented a graphical description for the entries of group inverse of a matrix $A$ with path digraph $D(A)$ and proposed a conjecture for tree digraph. Recently, in Theorem \ref{treeresult}, we gave a formula for the entries of the group inverse of a matrix with a special class of tree digraphs and proved that the conjecture proposed in \cite{cov} holds for this special class of tree digraphs. Our first main result shows that \cite[Theorem 1.4]{nandi} is extendable for the larger class $\mathcal{D}$.
\begin{thm}\label{finalresult}
Let $A=(a_{ij})$ be an $n\times n$ real matrix such that $D(A)\in \mathcal{D}$. Assume that $D(A)$ is a strongly connected digraph and $\Delta _A \neq 0$. Let $A^{\#}=(\alpha _{ij})$ and let $\mu _{ij}$ be defined in \ref{mu}. Then, $\alpha _{ij}=\frac{\mu _{ij}}{\Delta _A}$.
\end{thm}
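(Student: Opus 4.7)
The plan is to verify that the matrix $X=(X_{ij})$ defined by $X_{ij}=\mu_{ij}/\Delta_A$ is the group inverse of $A$. Since Theorem \ref{treeresult} already settles the case in which $D(A)$ is a tree digraph in $\mathcal{D}$, the task is to cope with the new combinatorial phenomena that appear when $D(A)\in\mathcal{D}$ possesses cycles of length greater than $2$. Two ingredients should be constantly in the background: the dominance property that every non-pendant vertex is adjacent to a pendant vertex, and Proposition \ref{uniquealternating}, which asserts uniqueness of the alternating cycle chain between maximally matchable vertices.

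I would begin by establishing the existence of $A^{\#}$ via the rank condition $\text{rank}(A)=\text{rank}(A^2)$ under the hypothesis $\Delta_A\neq 0$. Pairing each non-pendant vertex with a chosen pendant neighbor and using $\Delta_A\neq 0$ to guarantee a nondegenerate maximum matching structure, one can identify a common spanning set of columns for both $A$ and $A^2$, yielding equality of ranks.

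Next, having defined $X$, I would verify the three defining relations $AXA=A$, $XAX=X$, and $AX=XA$ by direct combinatorial expansion. The heart of the matter is an identity of the form
\[
\frac{1}{\Delta_A}\sum_{k,\ell} a_{ik}\,\mu_{k\ell}\,a_{\ell j}=a_{ij},
\]
which, upon expanding $\mu_{k\ell}$ into the sum of $\beta_{k\ell}\cdot\beta_{\overline{k,\ell}}(M)$ over $M\in\mathbb{M}(k,\ell)$, reorganizes as a signed sum indexed by pairs consisting of an alternating cycle chain together with a matching of the complementary vertices. Most such pairs either (i) extend to an alternating cycle chain from $i$ to $j$ that overlaps itself or a matching cycle, forcing cancellation via the sign $(-1)^{(m-1)/2}$ in $\beta$, or (ii) combine to produce exactly $a_{ij}\,\eta(M)$ summed over $M\in\mathbb{M}$, whose total is $a_{ij}\Delta_A$. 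The relations $AX=XA$ and $XAX=X$ follow by analogous bookkeeping.

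The main obstacle I foresee is controlling the alternating cycle chains whose vertices lie on a long cycle of $D(A)$: a priori several cycle chains could be produced when reassembling the terms above, so one must argue that only one survives. The combination of the pendant-vertex hypothesis with Proposition \ref{uniquealternating} should force the required uniqueness: any long cycle contains non-pendant vertices, each of which is adjacent to a pendant, and the associated pendant cycles have to participate in every maximum matching that covers those non-pendant endpoints. This sharply constrains which edges of a long cycle may belong to a maximum matching, and thereby pins down the alternating cycle chains through that long cycle. Carefully tracking these forced pendant cycles and demonstrating that they induce exactly the cancellations needed in the combinatorial identity above is where I expect the bulk of the work to lie.
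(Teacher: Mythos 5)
Your plan — verify the three defining equations $AXA=A$, $XAX=X$, $AX=XA$ for $X_{ij}=\mu_{ij}/\Delta_A$ by combinatorial expansion, using the pendant-domination property and the uniqueness of alternating cycle chains to organize the sign cancellations — is exactly the route the paper takes (Lemmas \ref{1}, \ref{2}, \ref{3}, with the case analysis driven by Propositions \ref{nononpendant} and \ref{uniquealternating}). The only organizational difference is that the paper proves $AX=XA$ first and computes the entries of $AX$ explicitly (Corollary \ref{modified first equation}), which reduces the triple sum you target to a much lighter verification; your separate rank argument for existence is also unnecessary, since exhibiting an $X$ satisfying the three equations already yields existence.
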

Example \ref{ssd} shows that the group inverse of a real square matrix $A$ with a simple symmetric digraph may not be a matrix with a simple symmetric digraph, whereas $D(A^{\#})$ is a simple symmetric digraph if $D(A)\in \mathcal{D}$. Now, a natural question arises: When $D(A)$ and $D(A^{\#})$ both are in $\mathcal{D}$ for a real square matrix $A$? We give the answer in our final main result.
\begin{thm}\label{maintheorem}
Let $A$ be an $n\times n$ real matrix such that $D(A)\in \mathcal{D}$. Assume that $D(A)$ is strongly connected and $\Delta _A\neq 0$. Then $D(A^{\#})\in \mathcal{D}$ if and only if $D(A)$ is either a corona digraph or a star tree digraph.
\end{thm}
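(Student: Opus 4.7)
My plan is to apply Theorem \ref{finalresult} throughout, so that membership of a pair $(i,j)$ in the support of $A^{\#}$ is equivalent to $\mu_{ij}\ne 0$, which in turn is a combinatorial condition on maximally matchable pairs and alternating cycle chains in $D(A)$. Both directions then reduce to analyzing the structure of alternating cycle chains, together with a pendant-swap argument that links the non-vanishing of sub-sums to $\Delta_A\ne 0$.

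For sufficiency, I treat the two cases separately. When $D(A)$ is a star with center $c$, each maximum matching is a singleton $\{(c,v,c)\}$ for a pendant $v$, and any cycle chain of length at least two revisits $c$, so $\mathbb{M}(i,j)\ne\emptyset$ exactly when $\{i,j\}=\{c,v\}$ for a pendant $v$. In that case $\mu_{cv}=a_{cv}C$ with a factor $C\ne 0$ inherited from $\Delta_A\ne 0$, so $D(A^{\#})$ is again a star and lies in $\mathcal{D}$. When $D(A)$ is a corona of a core digraph $H$ on the non-pendant vertices, the unique perfect matching $M^{*}$ pairing each non-pendant $u$ with its pendant $u'$ is the only maximum matching. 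Since $u'$ lies in exactly one 2-cycle, any alternating cycle chain beginning at $u$ has length one and ends at $u'$. Thus each non-pendant of $D(A)$ becomes a pendant of $D(A^{\#})$, adjacent only to its former pendant $u'$, and the non-pendants of $D(A^{\#})$ are all of the form $u'$; each such $u'$ is still adjacent to the pendant $u$ of $D(A^{\#})$, so $D(A^{\#})\in\mathcal{D}$.

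For necessity, suppose $D(A)\in\mathcal{D}$ is strongly connected but neither a star nor a corona. The non-star assumption gives at least two non-pendant vertices; combined with the dominating-pendant property in $\mathcal{D}$, the non-corona assumption forces some non-pendant vertex $u$ to have at least two pendant neighbors $p_1,p_2$. I claim $u$ itself witnesses $D(A^{\#})\notin\mathcal{D}$. The central tool is a swap lemma: for any non-pendant $w$ adjacent to $u$ in $D(A)$, the cycle $(u,w,u)$ cannot lie in any maximum matching of $D(A)$, because $w$ carries its own pendant $q$ by $D(A)\in\mathcal{D}$, and if $(u,w,u)\in M$ then both $p_1$ and $q$ are unmatched in $M$, so the swap $M\setminus\{(u,w,u)\}\cup\{(u,p_1,u),(w,q,w)\}$ strictly enlarges $M$. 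Any alternating cycle chain from $u$ of length at least two must begin with a cycle $(u,i_2,u)\in M$ whose intermediate vertex $i_2$ is non-pendant (pendants are in exactly one 2-cycle and cannot extend the chain), so the swap lemma rules out all such chains. Hence the $D(A^{\#})$-neighbors of $u$ are exactly the pendants of $u$ in $D(A)$; a pendant-swap bijection between the maximum matchings containing $(u,p_i,u)$ and those containing $(u,p_j,u)$ shows that each $\mu_{u p_i}$ is a nonzero multiple of $\Delta_A$, so $u$ has at least two neighbors in $D(A^{\#})$ and is non-pendant there.

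It remains to show that no $p_i$ is pendant in $D(A^{\#})$. By strong connectivity and the non-star hypothesis, $u$ has a non-pendant neighbor $v$ in $D(A)$, and $v$ carries a pendant $q$ since $D(A)\in\mathcal{D}$. The length-three cycle chain $((p_i,u,p_i),(u,v,u),(v,q,v))$ is alternating with respect to any maximum matching $M$ that contains $(p_i,u,p_i)$ and $(v,q,v)$ (noting $(u,v,u)\notin M$ by the swap lemma), and such matchings exist and contribute a nonzero sum by the same pendant-swap argument. Thus $p_i$ is adjacent in $D(A^{\#})$ to both $u$ and $q$, so $p_i$ is non-pendant in $D(A^{\#})$. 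Combining, $u$ is a non-pendant of $D(A^{\#})$ all of whose $D(A^{\#})$-neighbors are non-pendant, so $D(A^{\#})\notin\mathcal{D}$. The main obstacle I anticipate is establishing non-vanishing of the relevant $\mu$-sums from the merely global hypothesis $\Delta_A\ne 0$; the pendant-swap bijection resolves this by showing that restricting the maximum matchings to those containing any prescribed set of pendant edges multiplies $\Delta_A$ by an explicit nonzero factor, which is what makes the otherwise delicate sign-and-cancellation analysis go through.
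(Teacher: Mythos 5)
Your proposal is correct, and for the harder (necessity) direction it is essentially the paper's own argument: choose a non-pendant vertex $u$ with at least two pendant neighbours $p_1,\ldots,p_s$ and a non-pendant neighbour $v$ carrying its own pendant $q$; use Proposition \ref{nononpendant} (your ``swap lemma'' is exactly its proof) to see that the only alternating cycle chains emanating from $u$ are the length-one pendant cycles, so the $D(A^{\#})$-neighbours of $u$ are precisely $p_1,\ldots,p_s$; then exhibit the length-three chain $((p_i,u,p_i),(u,v,u),(v,q,v))$ to make each $p_i$ non-pendant in $D(A^{\#})$. The one genuine divergence is the sufficiency for coronas: the paper (Proposition \ref{starcorona}(ii)) writes $A$ in the block form \ref{glsform} with $F,G$ diagonal, computes $A^{\#}=A^{-1}$ explicitly, and concludes that $D(A^{\#})$ is again a corona, whereas you stay combinatorial, observing that the set of all pendant cycles is the unique maximum matching and classifying the alternating cycle chains; your route keeps the entire proof inside the Theorem \ref{finalresult} framework, while the paper's yields the slightly stronger structural conclusion. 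You also inline the non-vanishing of the relevant $\mu_{ij}$ via the factorization $\sum_{M\in\mathbb{M}(i,j)}\beta_{\overline{i,j}}(M)=\Delta_A/(\alpha_u\alpha_v)$, which the paper instead delegates to Corollary \ref{nonzero}; both are needed and your version is the correct explicit form. (Minor quibbles only: your star-case remark that every cycle chain of length at least two ``revisits $c$'' is not literally true for chains with $c$ as the middle vertex, but the conclusion stands since alternating chains have odd length and length-three chains cannot exist in a star; and the claim that $u$ has a non-pendant neighbour deserves the one-line justification that otherwise $u$ and its pendants would form the whole strongly connected digraph, making it a star---an assertion the paper likewise leaves implicit.)
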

\noindent\textbf{Organization of the paper:} The remaining sections are devoted to proving our main results above. In section \ref{section2}, we give a few properties of digraphs in $\mathcal{D}$ and recall a block-wise description for the group inverse of a matrix $A$ with $D(A)\in \mathcal{D}$, and prove Theorem \ref{finalresult} by giving three Lemmas. In section \ref{section3}, we provide a necessary and sufficient condition for the $ij$-th entry of the matrix $A^{\#}$ to be non-zero for a real square matrix $A$ with $D(A)\in \mathcal{D}$ and prove Theorem \ref{maintheorem}.

\section{Graphical description for the group inverses of matrices with simple symmetric digraphs}\label{section2}
Recall that a real square matrix $A=(a_{ij})$ is called {\it combinatorially symmetric} if $a_{ij}\neq 0$ iff $a_{ji}\neq 0$. Trivially, any symmetric matrix is combinatorially symmetric. Parter was the first author to use the concept of combinatorially symmetric matrices \cite{parter}. In 1974, Maybee first introduced the notion of a combinatorially symmetric matrix in \cite{maybee}. Note that for a simple symmetric digraph $D(A)$, the corresponding matrix $A$ is a combinatorially symmetric matrix with zero diagonal.

An important point to note is that the underlying graphs (of this class $\mathcal{D}$) are a special case of cluster networks derived by taking arbitrary graphs as bases and stars as satellites, \cite{ca,zhang}. Cluster networks are highly relevant in applications in Chemistry since all composite molecules consisting of some amalgamation over a central submolecule can be understood as generalized cluster networks. For instance, they can be used to understand some issues in metal-metal interaction in some molecules, since a cluster network structure can be easily found. In \cite{zhang}, the Kirchhoff index formulae for composite graphs known as join, corona and cluster of two graphs, are presented, in terms of the pieces. The Kirchhoff index formulae and the effective resistances of generalized composite networks, such as generalized cluster or corona network are obtained, in terms of the pieces, in \cite{ca}. We refer the reader to \cite{ac,bendito} for more details on the kirchhoff index of networks.

A real square matrix $A$ is said to be {\it irreducible} if $D(A)$ is strongly connected. Let $N(i)$ denote the set of all vertices adjacent to vertex $i$. Suppose a digraph $D$ is in $\mathcal{D}$. Then, each component of $D$ also belongs to $\mathcal{D}$. Henceforth, we will consider a strongly connected digraph in $\mathcal{D}$. Now, we obtain some special properties for the digraphs in $\mathcal{D}$ in the following few results. In general, they are not valid for any simple symmetric digraph.

\begin{prop}\label{nononpendant}
Let $D\in \mathcal{D}$ be a strongly connected digraph. Then, no non-pendant cycle can belong to a maximum matching of D.
\end{prop}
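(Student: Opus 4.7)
The plan is to argue by contradiction. Suppose a maximum matching $M$ of $D$ contains a non-pendant 2-cycle $(i,j,i)$; I will produce a strictly larger matching $M'$, contradicting maximality.

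Since both $i$ and $j$ are non-pendant and $D \in \mathcal{D}$, each is adjacent to at least one pendant vertex. Choose a pendant vertex $p$ adjacent to $i$ and a pendant vertex $q$ adjacent to $j$. The first observation I would record is that $p \neq q$: if a single vertex were adjacent to both $i$ and $j$, it would be incident to the two distinct 2-cycles $(p,i,p)$ and $(p,j,p)$, contradicting the definition of a pendant vertex.

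Next, I would track where $p$ and $q$ sit in $M$. Because $p$ is pendant and adjacent only to $i$, the unique 2-cycle incident to $p$ is $(p,i,p)$; since $i$ is already matched with $j$ inside $M$, the vertex $p$ must be unmatched in $M$. The same reasoning shows $q$ is unmatched in $M$. Now form
\[
M' = \bigl(M \setminus \{(i,j,i)\}\bigr) \cup \bigl\{(p,i,p),(q,j,q)\bigr\}.
\]
The three 2-cycles $(p,i,p)$, $(q,j,q)$ and the old cycles in $M \setminus\{(i,j,i)\}$ are pairwise vertex-disjoint (the latter avoid $i,j$ by the matching property, and avoid $p,q$ because $p,q$ were unmatched), so $M'$ is a valid matching with $|M'| = |M|+1$.

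This contradicts the maximality of $M$, so no such non-pendant 2-cycle can belong to $M$. The only delicate point is confirming $p \neq q$ together with the disjointness check that makes $M'$ genuinely a matching; everything else is a direct application of the definitions of \emph{pendant} vertex and of the class $\mathcal{D}$.
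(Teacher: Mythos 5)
Your proof is correct and complete: the exchange argument (replace the non-pendant matched cycle $(i,j,i)$ by the two pendant cycles $(p,i,p)$ and $(q,j,q)$, after checking $p\neq q$ and that $p,q$ are unmatched) is exactly the kind of augmentation the paper relies on, since its own ``proof'' merely cites the tree-case argument from the reference and asserts it carries over verbatim. The points you flag as delicate --- $p\neq q$ and the disjointness of $M'$ --- are precisely the ones that need checking, and you handle both correctly from the definitions of pendant vertex and of the class $\mathcal{D}$.
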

\begin{proof}
In \cite[Proposition 2.2]{nandi}, we prove the same result for a tree digraph $D\in \mathcal{D}$. For any digraph $D\in \mathcal{D}$, we can obtain this result by giving the same proof.
\end{proof}
\begin{rem}\label{properties}
Let $D\in \mathcal{D}$ have $k$ non-pendant vertices. Then, the number of 2-cycles in a maximum matching is always $k$. Note that, every non-pendant vertex is matched in any maximum matching of $D$, and both the endpoints of a length three alternating cycle chain are pendant vertices and a length one alternating cycle chain is nothing but a pendant cycle.
\end{rem}
\begin{prop}\label{uniquealternating}
Let $D\in \mathcal{D}$ be a strongly connected digraph. We then have the following:\\
$(i)$ The length of any alternating cycle chain is at most three.\\
$(ii)$ Any alternating cycle chain between two vertices is unique.
\end{prop}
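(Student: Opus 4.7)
My plan is to derive both statements from Proposition \ref{nononpendant} together with the key observation that any vertex sitting in the interior of a cycle chain is automatically non-pendant, because it is incident to two distinct $2$-cycles of the chain.

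For part (i), I would argue by contradiction: suppose an alternating cycle chain $C_m(i_1,i_{m+1})$ has $m\geq 5$. Since cycles alternate between $M$ and $M^c$ with the first and last in $M$, the $2$-cycle $(i_3,i_4,i_3)$, at odd position $3$, also lies in $M$. However, when $m\geq 5$ the vertex $i_3$ is incident to the two distinct chain cycles $(i_2,i_3,i_2)$ and $(i_3,i_4,i_3)$, and $i_4$ is incident to $(i_3,i_4,i_3)$ and $(i_4,i_5,i_4)$, so both $i_3$ and $i_4$ are non-pendant. Hence $(i_3,i_4,i_3)$ is a non-pendant cycle lying in a maximum matching, contradicting Proposition \ref{nononpendant}. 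Therefore $m\in\{1,3\}$.

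For part (ii), I would split on the length allowed by (i). A length-$1$ alternating chain must equal $((i,j,i))$ and is thus determined by $i$ and $j$. For a length-$3$ chain $((i,i_2,i),(i_2,i_3,i_2),(i_3,j,i_3))$, Remark \ref{properties} forces both endpoints $i$ and $j$ to be pendant, and since a pendant vertex is incident to exactly one $2$-cycle it has a unique neighbor; consequently $i_2$ must be the unique neighbor of $i$ and $i_3$ the unique neighbor of $j$, pinning down the chain. Finally I need to exclude the possibility that a length-$1$ and a length-$3$ alternating chain coexist between the same pair $i,j$: the former says $i$ and $j$ are adjacent, while the latter forces $i$ to be pendant with a neighbor $i_2\neq j$, so $i$ would have two distinct neighbors, contradicting pendancy.

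The main step is part (i); once it is in hand, part (ii) becomes a pendant-vertex bookkeeping exercise. The delicate point in (i) is arranging that the offending cycle is simultaneously in $M$ and non-pendant, which forces me to exhibit a cycle at an \emph{odd} position that is also \emph{strictly interior} to the chain, and this is precisely what becomes possible as soon as $m\geq 5$.
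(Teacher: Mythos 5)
Your proof is correct and follows essentially the same route as the paper: part (i) exhibits a maximum-matching cycle strictly interior to any alternating chain of length at least five and invokes Proposition \ref{nononpendant}, and part (ii) uses pendancy of the endpoints of a length-three alternating chain (hence uniqueness of their neighbours) to pin the chain down. The only cosmetic difference is that you explicitly rule out a coexisting length-one and length-three chain between the same pair, which the paper handles implicitly by forcing the two chains to share initial and terminal $2$-cycles and then appealing to parity.
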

\begin{proof}
$(i)$ Suppose $D$ has an alternating cycle chain $C$ of length at least five. Then, $C$ must have at least one non-pendant maximum matching cycle, a contradiction to Proposition \ref{nononpendant}.

$(ii)$ Let $C$ be an alternating cycle chain from vertex $i$ to vertex $j$ in $D$. Then, by first part, the length of $C$ is at most three. If the length is one then $C$ is simply a pendant cycle. So, it is unique. Now, let us consider the case when the length of $C$ is three. Since pendant cycles are the only maximum matching cycles (corresponding to any maximum matching), the initial and the terminal vertices of $C$ should be pendant vertices. So, $i$ and $j$ are pendant vertices. Let $\tilde{C}$ be another alternating cycle chain from $i$ to $j$. Since $i$ and $j$ are pendant vertices, the initial and the terminal 2-cycles of $C$ and $\tilde{C}$ must be the same. Here, $\tilde{C}$ is an alternating cycle chain, so its length should be odd. $C$ and $\tilde{C}$ are two different alternating cycle chains with the condition that they have the same initial and terminal 2-cycles. This means that the length of the cycle chain $\tilde{C}$ should be at least five, a contradiction to the fact that the length of any alternating cycle chain in $D$ is at most three (as we proved in part (i)). Therefore, $C$ and $\tilde{C}$ coincide.
\end{proof}
\begin{rem}
For a tree digraph, there is a unique cycle chain between two arbitrary vertices. A simple symmetric digraph $D(A)\in \mathcal{D}$ may not have a unique cycle chain between two arbitrary vertices. If a cycle chain between two vertices is an alternating cycle chain, it is always a unique alternating cycle chain between them in $D(A)$.
\end{rem}
Let $A$ be an $n\times n$ real matrix such that $D(A)\in \mathcal{D}$. Then recall a blockwise description for $A^{\#}$ from \cite{mcdonald}. Suppose $D(A)$ has $k$ non-pendant vertices. For $i=1,2,\ldots ,k$, let $x_i$ and $y_i$ be (column) vectors of length $r_i \in \mathbb{N}$ such that every coordinate is nonzero $(\sum _{i=1}^kr_i=n-k)$. Set
\begin{center}
 $F=\begin{pmatrix}
x_1^T & 0 & \hdots & 0 \\
0 & x_2^T & \ddots & \vdots \\
\vdots & \ddots & \ddots & 0 \\
0 & \hdots & 0 & x_k^T \\
\end{pmatrix}\in \mathbb{R}^{k\times (n-k)}$ and $G=\begin{pmatrix}
y_1 & 0 & \hdots & 0 \\
0 & y_2 & \ddots & \vdots \\
\vdots & \ddots & \ddots & 0 \\
0 & \hdots & 0 & y_k \\
\end{pmatrix}\in \mathbb{R}^{(n-k)\times k}$.
\end{center}
Then $A$ can be written permutationally similar to a matrix of the form \begin{equation}\label{glsform}
\begin{pmatrix}
E & F  \\
G & 0 \\
\end{pmatrix},
\end{equation}
where $E=(e_{ij})\in \mathbb{R}^{k\times k}$. Set $\alpha _i=x_i^Ty_i$ for all $i\in \{1,2,\ldots ,k\}$. Let $W=(W_{ij})$ be the $k \times k$ block matrix, where $W_{ij}=(\frac{e_{ij}}{\alpha _i\alpha _j})y_ix_j^T$, an $r_i\times r_j$ matrix. Then, by \cite[Theorem 2.5]{mcdonald}, the group inverse of matrices of the form \ref{glsform} has the $2\times 2$ block form
\begin{equation}\label{gigls}
\begin{pmatrix}
0 & ~Y  \\
Z & -W \\
\end{pmatrix},
\end{equation}
where $Y=\frac{x_1^T}{\alpha _1}\oplus \frac{x_2^T}{\alpha _2}\oplus \ldots \oplus \frac{x_k^T}{\alpha _k}$ and $Z=\frac{y_1}{\alpha _1}\oplus \frac{y_2}{\alpha _2}\oplus \ldots \oplus \frac{y_k}{\alpha _k}$.

For a square matrix $A$, $A^{\#}$ may not always exist. For instance, if $A=\begin{pmatrix}
0 & 1  \\
0 & 0 \\
\end{pmatrix},$ then $A^{\#}$ does not exist. In particular, it is easy to show that the group inverse does not exist for any nilpotent matrix. The following proposition provides a necessary and sufficient condition for the existence of the matrix $A^{\#}$ when $D(A)\in \mathcal{D}$, that is the sum of all maximum matchings in $D(A)$ is nonzero.
\begin{prop}
Let $A$ be an $n\times n$ real matrix such that $D(A)\in \mathcal{D}$. Assume that $D(A)$ is strongly connected. Then $A^{\#}$ exists if  and only if $\Delta _A\neq 0$.
\end{prop}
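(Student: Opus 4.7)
The plan is to use the block form \eqref{glsform} of $A$ together with the standard criterion that $A^{\#}$ exists if and only if $\mathbb{R}^n=\operatorname{Im}(A)\oplus\operatorname{Ker}(A)$, equivalently $\operatorname{Im}(A)\cap\operatorname{Ker}(A)=\{0\}$. First I would dispose of the degenerate case where $D(A)$ has no non-pendant vertex: strong connectedness together with the fact that every vertex is then incident to exactly one $2$-cycle forces $D(A)$ to be a single $2$-cycle on two vertices, and a direct $2\times 2$ computation settles the claim. From now on assume there are $k\ge 1$ non-pendant vertices and write $A=\begin{pmatrix}E & F\\ G & 0\end{pmatrix}$.

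The key structural observation is that $F$ has full row rank $k$ and $G$ has full column rank $k$, since every diagonal block of $F$ (resp.\ $G$) is a nonzero row vector $x_i^T$ (resp.\ column vector $y_i$). This makes $\operatorname{Im}(A)$ and $\operatorname{Ker}(A)$ simple to describe: given $(u,v)^T\in\mathbb{R}^k\times\mathbb{R}^{n-k}$, the vector $Fv$ sweeps all of $\mathbb{R}^k$ as $v$ varies, so $\operatorname{Im}(A)=\mathbb{R}^k\times\operatorname{Im}(G)$; dually, $Gu=0$ forces $u=0$, giving $\operatorname{Ker}(A)=\{0\}\times\operatorname{Ker}(F)$. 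Therefore $\operatorname{Im}(A)\cap\operatorname{Ker}(A)=\{0\}\times(\operatorname{Im}(G)\cap\operatorname{Ker}(F))$, and triviality of this intersection is equivalent to $F$ being injective on $\operatorname{Im}(G)$, i.e.\ to the $k\times k$ matrix $FG$ being invertible.

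It remains to identify $FG$ and $\Delta_A$ directly in terms of the scalars $\alpha_i=x_i^T y_i$. The pendant neighborhoods $P(1),\ldots,P(k)$ are pairwise disjoint because each pendant vertex has a unique neighbor, so a direct block multiplication yields $FG=\operatorname{diag}(x_1^Ty_1,\ldots,x_k^Ty_k)=\operatorname{diag}(\alpha_1,\ldots,\alpha_k)$, which is invertible if and only if $\prod_i\alpha_i\ne 0$. On the other hand, by Proposition \ref{nononpendant} and Remark \ref{properties}, every maximum matching of $D(A)$ consists of exactly $k$ pendant $2$-cycles, one incident to each non-pendant vertex; since the choices across different non-pendants are independent (again by disjointness of the $P(i)$),
\[
\Delta_A\;=\;\prod_{i=1}^k \sum_{p\in P(i)} a_{i,p}a_{p,i}\;=\;\prod_{i=1}^k \alpha_i.
\]
Combining these steps, $A^{\#}$ exists if and only if $FG$ is invertible, if and only if $\prod_i\alpha_i\ne 0$, if and only if $\Delta_A\ne 0$.

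No step looks genuinely difficult; the only place requiring care is the identification $\Delta_A=\prod_i\alpha_i$, which leans on both Proposition \ref{nononpendant} (no non-pendant $2$-cycle lies in any maximum matching) and on ruling out pendant-pendant $2$-cycles, the latter being forced by strong connectedness once $k\ge 1$.
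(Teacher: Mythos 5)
Your proof is correct and follows essentially the same route as the paper: reduce to the block form \eqref{glsform}, observe that $FG=\operatorname{diag}(\alpha_1,\ldots,\alpha_k)$ with $\alpha_i=x_i^Ty_i$, and identify $\Delta_A=\prod_{i=1}^k\alpha_i$ via Proposition \ref{nononpendant}. The only difference is that where the paper invokes \cite[Theorem 3.1]{lw} to get the criterion $rank(F)=rank(FG)$, you derive the equivalent condition (invertibility of $FG$) directly from $\operatorname{Im}(A)\cap\operatorname{Ker}(A)=\{0\}$, which makes the argument self-contained; you also explicitly handle the degenerate case $k=0$, which the paper leaves implicit.
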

\begin{proof}
$A$ has the form \ref{glsform} since $D(A)\in \mathcal{D}$. Now, set $\alpha _i=x_i^Ty_i$ for all $i\in \{1,2,\ldots ,k\}$. For each $i$, the non-pendant vertex $i$ is adjacent to $r_i$ pendant vertices and $\alpha _i$ is the sum of cycle products of $r_i$ pendant cycles incident $i$. Note that
\begin{center}
$FG=\begin{pmatrix}
\alpha _1 & 0 & \hdots & 0 \\
0 &\alpha _2 & \ddots & \vdots \\
\vdots & \ddots & \ddots & 0 \\
0 & \hdots & 0 & \alpha _k \\
\end{pmatrix}\in \mathbb{R}^{k\times k}$ and $F^{\dagger}=\begin{pmatrix}
\frac{x_1}{x_1^Tx_1} & 0 & \hdots & 0 \\
0 & \frac{x_2}{x_2^Tx_2} & \ddots & \vdots \\
\vdots & \ddots & \ddots & 0 \\
0 & \hdots & 0 & \frac{x_k}{x_k^Tx_k} \\
\end{pmatrix}$.
\end{center}
By \cite[Theorem 3.1]{lw}, $A^{\#}$ exists if and only if $rank(F)=rank(FG)$. Here, $rank(F)=k$ which implies $A^{\#}$ exists if and only if $\alpha _i\neq 0$ for all $i\in \{1,2,\ldots ,k\}$. By Proposition \ref{nononpendant}, all the maximum matching cycles are pendant cycles and a maximum matching of $D(A)$ has a set of $k$ pendant cycles incident to $k$ non-pendant vertices (the total number of maximum matching is $\prod _{i=1}^kr _i$). So, $\Delta _A=\prod _{i=1}^k\alpha _i$ and it follows that $A^{\#}$ exists if and only if $\Delta _A\neq 0$.
\end{proof}

\begin{rem}\label{twoproperties}
Let $A=(a_{ij})$ be an $n\times n$ real matrix such that $D(A)\in \mathcal{D}$. Let $\{i_1,i_2,\ldots ,i_{s+1}\}$ be a set of all pendant vertices with a common neighbour $q$. Then,\\
\rm{$(i)$} $\mathbb{M}=\cup _{m=1}^{s+1}\mathbb{M}(i_m)$ and $\cap _{m=1}^{s+1}\mathbb{M}(i_m)=\phi $\\
\rm{$(ii)$} $\sum _{M\in \mathbb{M}(i_m,q)}\beta _{\overline{i_m,q}}(M)=\sum _{M\in \mathbb{M}(i_t,q)}\beta _{\overline{i_t,q}}(M)$ for all $m,t\in \{1,2,\ldots ,s+1\}$
\end{rem}

In order to prove Theorem \ref{finalresult}, we need three lemmas, which we prove now.

\begin{lem}\label{1}
Let $A=(a_{ij})$ be an $n\times n$ real matrix such that $D(A)\in \mathcal{D}$. Let $D(A)$ be a strongly connected digraph and $\Delta _A \neq 0$. Let $B=(b_{ij})$ be the matrix given by $b_{ij}=\frac{\mu _{ij}}{\Delta _A}, 1\leq i,j\leq n$. Then, $AB=BA$.
\end{lem}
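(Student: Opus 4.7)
The plan is to establish $(AB)_{ij}=(BA)_{ij}$ entrywise by expanding
$(AB)_{ij}=\Delta_A^{-1}\sum_k a_{ik}\mu_{kj}$ and $(BA)_{ij}=\Delta_A^{-1}\sum_k\mu_{ik}a_{kj}$,
and then running a case analysis on whether each of $i,j$ is pendant or non-pendant in $D(A)$. The tractability of this analysis rests on Proposition \ref{nononpendant} (only pendant cycles occur in a maximum matching) and Proposition \ref{uniquealternating} (every alternating cycle chain has length at most $3$, and is unique when it exists), which reduce the set of intermediate vertices $k$ with $\mu_{kj}\neq 0$ to a very short list.

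As preparation I would derive closed forms for $b_{ij}=\mu_{ij}/\Delta_A$. The key observation is that once the unique alternating chain $C_m(i,j)$ is fixed, a matching in $\mathbb{M}(i,j)$ is obtained by choosing one pendant cycle at every non-pendant vertex not covered by $C_m(i,j)$; hence $\sum_{M\in\mathbb{M}(i,j)}\beta_{\overline{i,j}}(M)=\prod_t\alpha_t$ with the product over the uncovered non-pendant vertices, and combined with $\Delta_A=\prod_t\alpha_t$ this yields: (a) for each pendant vertex $p$ with unique non-pendant neighbour $q$, $b_{pq}=a_{pq}/\alpha_q$ and $b_{qp}=a_{qp}/\alpha_q$; (b) for two pendant vertices $p,p'$ with distinct adjacent non-pendant neighbours $q,q'$, $b_{pp'}=-a_{pq}a_{qq'}a_{q'p'}/(\alpha_q\alpha_{q'})$; and (c) $b_{ij}=0$ in every other case, in particular whenever $i,j$ are both non-pendant, whenever $i=j$, and whenever two pendant vertices share the same non-pendant neighbour.

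I would then verify $(AB)_{ij}=(BA)_{ij}$ case by case. If $i=j$ is non-pendant, only pendant neighbours $k$ of $i$ contribute and both sides collapse, via $\sum_k a_{ik}a_{ki}=\alpha_i$, to $1$. If $i\neq j$ are both non-pendant, any $k$ with $b_{kj}\neq 0$ must be a pendant neighbour of $j$, whose unique neighbour is $j$, forcing $a_{ik}=0$; both sides vanish. If $i,j$ are both pendant: when $q_i=q_j=:q$, only $k=q$ contributes on either side and gives $a_{iq}a_{qj}/\alpha_q$; when $q_i\neq q_j$, the only $k$ contributing to $(AB)_{ij}$ is $k=q_i$ and $b_{q_i,j}=0$ by (c), and similarly for $(BA)_{ij}$. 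The substantive case is $i$ non-pendant and $j$ pendant with non-pendant neighbour $q_j$ (and its transpose). On the $(AB)$ side, $k=q_j$ gives the length-$1$ contribution $a_{i,q_j}a_{q_j,j}/\alpha_{q_j}$; each pendant neighbour $k$ of $i$ gives a length-$3$ contribution through the chain $(k,i,k),(i,q_j,i),(q_j,j,q_j)$, and after summing over such $k$ and invoking $\sum_k a_{ik}a_{ki}=\alpha_i$ the aggregated length-$3$ total is $-a_{i,q_j}a_{q_j,j}/\alpha_{q_j}$, exactly cancelling the length-$1$ term. On the $(BA)$ side only $k=q_j$ is available, and $b_{i,q_j}=0$ by (c), so the sum is zero. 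Degenerate sub-cases ($i=q_j$ or $i$ not adjacent to $q_j$) are settled by checking that each term vanishes on its own.

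The main obstacle is this mixed case: the cancellation between the single length-$1$ contribution and the sum of length-$3$ contributions is engineered by the sign $(-1)^{(m-1)/2}$ in the definition of $\beta_{ij}$, and its verification requires enumerating every valid intermediate vertex $k$. Proposition \ref{uniquealternating} is essential to this enumeration, since it rules out alternating chains of length $\geq 5$ that would otherwise generate additional terms contributing to either side.
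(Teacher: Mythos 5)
Your proposal is correct and follows essentially the same route as the paper: an entrywise verification of $\sum_k a_{ik}\mu_{kj}=\sum_k \mu_{ik}a_{kj}$ by a case analysis on the pendant/non-pendant status of $i$ and $j$, resting on Propositions \ref{nononpendant} and \ref{uniquealternating}, with the same length-one versus length-three cancellation in the mixed case (the paper's subcase (iii.2.a)). The only real difference is presentational: you pre-evaluate $\sum_{M\in \mathbb{M}(i,j)}\beta_{\overline{i,j}}(M)$ as the product of the $\alpha_t$ over the non-pendant vertices not covered by the chain (in effect reading the entries of $B$ off the block formula \ref{gigls}), whereas the paper keeps these sums symbolic and cancels them through bijections and partitions of the sets of maximum matchings.
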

\begin{proof}
Let $A=(a_{ij})$. Then, $AB=BA$ has the following equivalent form :
\begin{equation}\label{ab=ba}
\sum _{k=1}^na_{ik}\mu _{kj}=\sum _{l=1}^n\mu _{il} a_{lj}\  \  \  for~every\  \  i,j\in \{1,2,\ldots n\}.
\end{equation}
First, we discuss the case $i=j$. Let $\{i_1,i_2,\ldots ,i_t\}\in \{1,2,\ldots ,n\}$ be such that for any $m\in \{1,2,\ldots ,t\}$, $a_{i,i_m}\neq 0$ and the cycle $(i,i_m,i)$ belongs to some maximum matching in $D(A)$. Since $\mathbb{M}(i,i_m)=\mathbb{M}(i_m,i)$ and $\beta _{\overline{i,i_m}}(M)=\beta _{\overline{i_m,i}}(M)$, the expressions on both the sides of equation (\ref{ab=ba}) are equal and they equal the common value $\sum_{m=1}^t\big (a_{ii_m}a_{i_mi}\sum _{M\in \mathbb{M}(i_m,i)}\beta _{\overline{i_m,i}}(M)\big)=\sum _{M\in \mathbb{M}(i)}\eta (M)$.

Now, we consider the fact $i\neq j$ and prove \ref{ab=ba} by considering four cases.\\
{\bf Case (i)}: $i$ and $j$ are pendant vertices.\\ 
\underline{{\it Subcase \rm{(i.1)}}}: $i$ and $j$ have a unique common neighbour. Let $q$ be the common neighbour. Then, $(i,q,i)$ and $(j,q,j)$ are both maximum matching cycles and cannot simultaneously be present in a maximum
matching. So, $\{M\backslash \{(i,q,i)\}\vert ~M\in \mathbb{M}(i,q)\}=\{M\backslash \{(j,q,j)\}\vert ~M\in \mathbb{M}(j,q)\}$. Thus,
\begin{align*}
a_{iq}\mu _{qj}& =a_{iq}a_{qj}\sum _{M\in \mathbb{M}(q,j)}\beta _{\overline{q,j}}(M) \\
 &= a_{qj}\Big (a_{iq}\sum _{M\in \mathbb{M}(i,q)}\beta _{\overline{i,q}}(M)\Big ) \\
 &= \mu _{iq}a_{qj}. 
\end{align*} 
It follows that equation \ref{ab=ba} holds.\\
\underline{{\it Subcase \rm{(i.2)}}}: There is no common neighbour for the vertices $i$ and $j$. Let vertices $i$ and $j$ be adjacent to non-pendant vertices $q$ and $p$, respectively. Then, the length of any cycle chain from $i$ to $j$ is at least three. As we have observed in Remark \ref{properties}, both the end vertices of a length three alternating cycle chain must be pendant vertices, $\mu _{qj}$ and $\mu _{ip}$ are both zero. So, equation \ref{ab=ba} is vacuously true.\\
{\bf Case (ii)}: $i$ and $j$ are non-pendant vertices. Since a non-pendant cycle can not be present in a maximum matching and the end vertices of a length three alternating cycle chain are pendant, $\mu _{qj}$ and $\mu _{ip}$ are zero for any arbitrary vertex $q$ adjacent to $i$ and $p$ adjacent to $j$ respectively. So, the expressions on both sides of equation \ref{ab=ba} are zero.\\
{\bf Case (iii)}: $i$ is a pendant vertex, while $j$ is a non-pendant vertex. Let $N_j=\{j_1,j_2,\ldots ,j_s\}$ be the set of all pendant vertices adjacent to $j$. We consider two subcases.\\
\underline{{\it Subcase \rm{(iii.1)}}}: $i$ is adjacent to $j$. then, the left hand side of \ref{ab=ba} is zero and the right hand side equal to
\begin{equation}\label{split}
\sum _{l=1}^s\mu _{ij_l} a_{j_lj} + \sum _{\{m\vert m \in N(j)\backslash N_j\}}\mu _{im} a_{mj}.
\end{equation}
The first sum is zero because $i$ and $j_l$ are pendant vertices with common neighbour $j$ for all $l\in \{1,2,\ldots ,s\}$. In the second sum, for any $m$, the vertex $m$ is non-pendant and also it is not adjacent to pendant vertex $i$. So, $\mu _{im}$ is zero.\\
\underline{{\it Subcase \rm{(iii.2)}}}: $i$ is not adjacent to $j$. Let $q$ be the non-pendant vertex adjacent to $i$. The analysis in this case is split into two further subcases, which we consider next. \\
\underline{{\it Subcase \rm{(iii.2.a)}}}: $q$ is adjacent to $j$. Since non-pendant cycles do not belong to any maximum matching, again, the left hand side of \ref{ab=ba} is zero, and the right hand side can be split as \ref{split}. Let $\mathbb{M}_{j_l}(i,q)\subseteq \mathbb{M}(i,q)$ be the set of maximum matching containing pendant cycle $(j,j_l,j)$. Then $\mathbb{M}(i,q)=\cup _{l=1}^s\mathbb{M}_{j_l}(i,q)$, a mutually disjoint union. Now, the second sum of \ref{split} is
\begin{align*}
\mu _{iq}a_{qj}& =a_{iq}a_{qj}\sum _{M\in \mathbb{M}(i,q)}\beta _{\overline{i,q}}(M) \\
 &= a_{qj}a_{iq}\sum _{l=1}^s\sum _{M\in \mathbb{M}_{j_l}(i,q)}\beta _{\overline{i,q}}(M) \\
 &= a_{qj}a_{iq}\sum _{l=1}^sa_{jj_l}a_{j_lj}\sum _{M\in \mathbb{M}_{j_l}(i,q)}\beta _{\overline{i,j_l}}(M) \\
 &= -\sum _{l=1}^sa_{j_lj}\big (-a_{iq}a_{qj}a_{jj_l}\big )\sum _{M\in \mathbb{M}(i,j_l)}\beta _{\overline{i,j_l}}(M) \\
 &= -\sum _{l=1}^sa_{j_lj}\mu _{ij_l}.
\end{align*} 
So, the right hand side of \ref{ab=ba} is also zero.\\
\underline{{\it Subcase \rm{(iii.2.b)}}}: $q$ is not adjacent to $j$. Since $q$ and $j$ both are non-pendant vertices, the left hand side of equation \ref{ab=ba} is zero. After splitting the right hand side as \ref{split}, in the first sum, the length of any cycle chain from vertex $i$ to $j_l$ must be at least four for any $l\in \{1,2,\ldots ,s\}$ and so, $\mu _{ij_j}$ is zero. Next, as $m$ is a non-pendant vertex and since it is not adjacent to $i$, the second sum is also zero, showing that the right hand side equals zero.\\
{\bf Case (iv)}: $i$ is a non-pendant vertex and $j$ is a pendant vertex. Let $N_i=\{i_1,i_2,\ldots ,i_r\}$ be the set of all pendant vertices adjacent to $i$. Then, the proof is the same as in Case (iii) by interchanging the roles of $i$ and $j$.
\end{proof}
In the next result, we present a graph theoretical interpretation for the entries of the product $AB$, where $A$ and $B$ are as defined in Lemma \ref{1}.

\begin{cor}\label{modified first equation}
Let $A$ and $B$ satisfy the hypotheses of Lemma \ref{1}. Then,\\
$$(AB) _{ii}=\left\{\begin{array}{cl} 1 &\mbox{if }i\mbox{ is a non-pendant vertex, }\\ \frac{\sum_{M\in \mathbb{M}(i)}\eta (M)}{\Delta _A} &\mbox{if }i\mbox{ is a pendant vertex}\end{array}\right.$$
while for $i\neq j$,
$$(AB) _{ij}=\left\{\begin{array}{cl} \frac{a_{qj}\mu _{iq}}{\Delta _A} &\mbox{if }i,j\mbox{ are pendant vertices and }\\  & \mbox{have a common neighbour } q,\\ 0 &\mbox{otherwise. }\end{array}\right.$$
\end{cor}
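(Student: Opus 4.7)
The plan is to directly read off the value of $(AB)_{ij}=\Delta_A^{-1}\sum_{k=1}^n a_{ik}\mu_{kj}$ from the case analysis carried out in Lemma \ref{1}, since most of the work has already been done: the lemma established what the sum $\sum_k a_{ik}\mu_{kj}$ equals in each configuration of $i$ and $j$. What remains is to interpret these values, and in the non-pendant diagonal case to exploit an additional structural fact from Remark \ref{properties}.

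For the diagonal entries, recall from the $i=j$ discussion at the start of the proof of Lemma \ref{1} that $\sum_{k=1}^n a_{ik}\mu_{ki}=\sum_{M\in\mathbb{M}(i)}\eta(M)$. Dividing by $\Delta_A$ immediately yields the pendant formula. If $i$ is non-pendant, I would invoke Remark \ref{properties}: every non-pendant vertex is matched in every maximum matching of $D(A)$, so $\mathbb{M}(i)=\mathbb{M}$, which gives $\sum_{M\in\mathbb{M}(i)}\eta(M)=\Delta_A$ and hence $(AB)_{ii}=1$.

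For the off-diagonal entries, I would run through the four cases of Lemma \ref{1}. In Case (ii) (both $i,j$ non-pendant) and Cases (iii.2.b), (iv) (analogous non-pendant endpoints on one side), the pointwise arguments already given show $\sum_k a_{ik}\mu_{kj}=0$. In Subcases (iii.1), (iii.2.a) and Subcase (i.2), the same arguments show each $\mu_{kj}$ appearing with $a_{ik}\neq 0$ is zero, so the sum vanishes. The only remaining situation is Subcase (i.1), where $i,j$ are both pendant sharing a common neighbour $q$; since $q$ is the unique neighbour of the pendant vertex $i$, the sum collapses to $a_{iq}\mu_{qj}$, and the identity $a_{iq}\mu_{qj}=a_{qj}\mu_{iq}$ established there yields $(AB)_{ij}=a_{qj}\mu_{iq}/\Delta_A$.

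Essentially no new obstacle arises; the main bookkeeping point is to ensure that in each off-diagonal case the terms $a_{ik}\neq 0$ force $k$ to lie in a restricted neighbourhood of $i$ (just the unique non-pendant neighbour, if $i$ is pendant), so that the vanishing or collapsing of $\mu_{kj}$ from Lemma \ref{1} can be applied term by term. The argument therefore amounts to organizing the conclusions of Lemma \ref{1} into the stated piecewise formula, supplemented by the $\mathbb{M}(i)=\mathbb{M}$ observation in the non-pendant diagonal case.
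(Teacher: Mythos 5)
Your proposal is correct and follows essentially the same route as the paper: the paper likewise reads the diagonal entries off the $i=j$ computation in Lemma \ref{1}, applies Remark \ref{properties} to get $\sum_{M\in\mathbb{M}(i)}\eta(M)=\Delta_A$ for non-pendant $i$, and disposes of the off-diagonal entries by citing the case analysis of Lemma \ref{1}, with only Subcase (i.1) contributing a non-zero value. Your write-up is in fact slightly more explicit than the paper's about why the sum collapses to the single term $a_{iq}\mu_{qj}$ when $i$ is pendant, but the substance is identical.
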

\begin{proof}
It follows from Lemma \ref{1} that $(AB)_{ii}=\frac{\sum_{M\in \mathbb{M}(i)}\eta (M)}{\Delta _A}$. By Remark \ref{properties}, a non-pendant vertex is matched in every maximum matching, and so for a non-pendant vertex $i$, $(AB)_{ii}=\frac{1}{\Delta _A}\cdot \Delta _A=1$. For $i\neq j$, the proof is followed by observing case by case in the above Lemma.
\end{proof}
\begin{rem}\label{zero}
In the above corollary, for $i\neq j$, $(AB)_{ij}$ is not equal to zero only when $i,j$ are pendant vertices and have common neighbour.
\end{rem}
\begin{lem}\label{2}
Let $A$ and $B$ satisfy the hypotheses of Lemma \ref{1}. Then, $ABA = A$.
\end{lem}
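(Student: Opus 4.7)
The plan is to use the very sparse shape of $AB$ computed in Corollary~\ref{modified first equation} and write
\[(ABA)_{ij}=\sum_{k=1}^{n}(AB)_{ik}\,a_{kj},\]
then verify the identity entry by entry. In the $i$-th row of $AB$ the only candidates for nonzero entries are the diagonal $(AB)_{ii}$ and (if $i$ is pendant) the columns indexed by the other pendant vertices sharing $i$'s unique neighbour; so the sum collapses to very few terms in every case, and the verification splits cleanly according to whether $i$ is pendant.

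If $i$ is non-pendant, Corollary~\ref{modified first equation} gives $(AB)_{ii}=1$ and $(AB)_{ik}=0$ for $k\neq i$, so $(ABA)_{ij}=a_{ij}$ immediately. If $i$ is pendant, let $q$ be its unique neighbour; the degenerate case $D(A)=K_{2}$ (where $q$ is also pendant) is disposed of by the direct computation $B=A^{-1}$. Otherwise $q$ is non-pendant, and writing $N_{q}^{\mathrm{pen}}=\{i,i_{1},\dots,i_{s}\}$ for the pendant vertices adjacent to $q$, we have
\[(ABA)_{ij}=(AB)_{ii}\,a_{ij}+\sum_{l=1}^{s}(AB)_{i\,i_{l}}\,a_{i_{l}j}.\]
Because each $i_{l}$ is pendant with unique neighbour $q$, we have $a_{i_{l}j}=0$ whenever $j\neq q$; and of course $a_{ij}=0$ whenever $j\neq q$ as well. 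So when $j\neq q$ the right-hand side collapses to $0=a_{ij}$, as required.

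The only case that really needs computation is $j=q$. I would set $S_{i}:=\sum_{M\in\mathbb{M}(i,q)}\beta_{\overline{i,q}}(M)$. Since the pendant $2$-cycle $(i,q,i)$ is itself the unique alternating cycle chain $C_{1}(i,q)$, one obtains $\mu_{iq}=a_{iq}S_{i}$, and $\sum_{M\in\mathbb{M}(i)}\eta(M)=a_{iq}a_{qi}S_{i}$ because every maximum matching containing $i$ arises by adjoining the cycle $(i,q,i)$ to a matching counted by $S_{i}$. Substituting these together with $(AB)_{i\,i_{l}}=a_{qi_{l}}\mu_{iq}/\Delta_{A}$ and factoring out $a_{iq}S_{i}/\Delta_{A}$ yields
\[(ABA)_{iq}=\frac{a_{iq}S_{i}}{\Delta_{A}}\Bigl(a_{iq}a_{qi}+\sum_{l=1}^{s}a_{qi_{l}}a_{i_{l}q}\Bigr)=\frac{a_{iq}S_{i}\alpha_{q}}{\Delta_{A}},\]
where $\alpha_{q}$ denotes the sum of cycle products over all pendant cycles at $q$.

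The remaining, and really only nontrivial, step is the identity $S_{i}\alpha_{q}=\Delta_{A}$, and this is precisely where Proposition~\ref{nononpendant} does the work: every maximum matching of $D(A)$ consists entirely of pendant cycles, one at each non-pendant vertex. Hence $\Delta_{A}$ factors as $\prod_{p\text{ non-pendant}}\alpha_{p}$, while $S_{i}$ is exactly the same product with the factor at $q$ omitted, $S_{i}=\prod_{p\neq q}\alpha_{p}$. Their ratio is $\alpha_{q}$, which closes the calculation $(ABA)_{iq}=a_{iq}$. Once this factorisation bookkeeping is set up, everything else is routine case analysis driven by the shape of $AB$; the main obstacle is only to recognise the two factorisations that produce $S_{i}\alpha_{q}=\Delta_{A}$.
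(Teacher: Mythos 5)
Your proof is correct and follows essentially the same route as the paper: an entrywise verification of $(ABA)_{ij}=a_{ij}$ that splits off the diagonal term $(AB)_{ii}a_{ij}$ from the off-diagonal contributions and then uses the sparse form of $AB$ from Corollary~\ref{modified first equation}. The only difference is that the paper delegates the case analysis to the proof of Theorem 2.8 of \cite{nandi} for tree digraphs, whereas you carry it out in full, making explicit the key factorizations $\Delta_A=\prod_p\alpha_p$ and $S_i=\prod_{p\neq q}\alpha_p$ that close the $j=q$ case.
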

\begin{proof}
To show that $ABA=A$, we show
\begin{equation}\label{ABA=A}
\sum _{k=1}^n(AB)_{ik}a_{kj}=\left\{\begin{array}{ll}
a_{ij}, & \mbox{when }(i, j) \mbox{ is an edge} \\
0, & \mbox{when } (i, j)\mbox{ is not an edge}.
\end{array}\right .
\end{equation}
Let $c$ be the left hand side of (\ref{ABA=A}). Then, $c$ can be written in the form $c=c_i+\tilde{c_i}$, where
$$c_i=(AB)_{ii}a_{ij}\  \  \  \mbox{and}\  \  \  \  \tilde{c_i}=\sum _{\substack {k=1 \\ k\neq i}}^n(AB)_{ik}a_{kj}.$$
Then, the rest of the calculations can be done by giving the same arguments as in the proof of \cite[Theorem 2.8]{nandi}.
\end{proof}
\begin{lem}\label{3}
Let $A$ and $B$ satisfy the hypotheses of Lemma \ref{1}. Then, $BAB = B$.
\end{lem}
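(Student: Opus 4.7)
My plan is to prove $BAB = B$ entry-wise, using $AB = BA$ from Lemma~\ref{1} to rewrite $BAB = B(AB)$. After clearing $\Delta_A$, the identity to prove becomes
\begin{equation*}
\sum_{k=1}^{n} \mu_{ik}\,(AB)_{kj} \;=\; \mu_{ij} \quad \text{for every } i, j.
\end{equation*}
By Corollary~\ref{modified first equation}, $(AB)_{kj}$ is nonzero only at the diagonal (with value $1$ when $j$ is non-pendant) and, off the diagonal, only when $k$ and $j$ are both pendant with a common non-pendant neighbor; this dramatically restricts the sum.

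If $j$ is non-pendant the identity is immediate: only $k = j$ survives and gives $\mu_{ij}$. When $j$ is pendant with unique non-pendant neighbor $q$, and $\{j = j_{1}, j_{2}, \ldots , j_{s+1}\}$ is the set of pendant neighbors of $q$, the sum reduces to
\begin{equation*}
\mu_{ij}\, \frac{\sum_{M \in \mathbb{M}(j)} \eta(M)}{\Delta_A} \;+\; \frac{a_{qj}}{\Delta_A} \sum_{m=2}^{s+1} \mu_{i j_m}\, \mu_{j_m q} \;=\; \mu_{ij}.
\end{equation*}
Since every maximum matching pairs $q$ with exactly one $j_m$ (Proposition~\ref{nononpendant} and Remark~\ref{properties}), one has $\mathbb{M} = \bigsqcup_{m} \mathbb{M}(j_m)$ disjointly and $\Delta_A = \sum_{m} a_{q j_m} a_{j_m q} f$, where $f := \sum_{M \in \mathbb{M}(j_m, q)} \beta_{\overline{j_m, q}}(M)$ is independent of $m$ by Remark~\ref{twoproperties}(ii); moreover $\mu_{j_m q} = a_{j_m q} f$. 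Substituting, the identity reduces to the termwise claim
\begin{equation*}
a_{q j_1}\, \mu_{i j_m} \;=\; a_{q j_m}\, \mu_{i j_1} \quad \text{for each } m = 2, \ldots, s+1,
\end{equation*}
i.e., that $\mu_{i j_m}/a_{q j_m}$ is independent of $m$.

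The heart of the proof is verifying this ratio, which I would do by case analysis on $i$. If $i = q$, then $\mu_{q j_m} = a_{q j_m} f$, so the ratio is $f$. If $i$ is non-pendant and $i \neq q$, no alternating cycle chain from $i$ to any pendant $j_m$ exists (by Proposition~\ref{uniquealternating}(i) together with the requirement that length-$3$ chains have pendant endpoints, from Remark~\ref{properties}), whence the ratio is $0$. If $i$ is pendant with non-pendant neighbor $p$, the only possible alternating cycle chain from $i$ to $j_m$ is $(i,p,i),\,(p,q,p),\,(q,j_m,q)$, which requires $p \neq q$ adjacent to $q$---an $m$-independent condition---so $\mu_{i j_m} = -a_{ip}a_{pq}a_{q j_m} \cdot h$, where $h$ is a product of cycle products over maximum matchings of the non-pendant vertices distinct from $p$ and $q$. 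The main obstacle is verifying that $h$ is genuinely common to all $m$: the pendant neighbors of $q$ other than the chosen $j_m$ have no other neighbor, so once $q$ is consumed by the chain they must remain unmatched and thus contribute nothing to $\beta_{\overline{i, j_m}}(M)$, regardless of the choice of $m$. This establishes the termwise identity and hence $BAB = B$.
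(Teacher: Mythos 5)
Your proposal is correct and follows essentially the same route as the paper: both use Lemma \ref{1} to reduce $BAB=B$ to a one-sided identity involving the sparse matrix $AB$, isolate the diagonal term, and dispose of the remaining terms via the common-neighbour structure recorded in Corollary \ref{modified first equation}. The only differences are cosmetic --- you group the product as $B(AB)$ rather than $(AB)B$, so your case analysis runs over $j$ instead of $i$, and you write out in full the pendant-vertex computation that the paper delegates to the cited proof of \cite[Theorem 2.7]{nandi}.
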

\begin{proof}
By Lemma \ref{1}, if we prove $ABB=B$, then we are done. This is equivalent to proving that,
\begin{equation}\label{bab=b}
\sum _{k=1}^n(AB)_{ik}\mu _{kj}=\left\{\begin{array}{ll}
\mu _{ij}, & \mbox{if }i,j\mbox{ are maximally matchable, }  \\
0, & \mbox{otherwise}.
\end{array}\right .
\end{equation}
Let $b$ be the left hand side of (\ref{bab=b}). Then, $b$ can be written in the form $b=b_i+\tilde{b_i}$, where
$$b_i=(AB)_{ii}\mu _{ij}\  \  \  \mbox{and}\  \  \  \  \tilde{b_i}=\sum _{\substack {k=1 \\ k\neq i}}^n(AB)_{ik}\mu _{kj}.$$
Then, again the rest of the proof same as the proof of \cite[Theorem 2.7]{nandi}.
\end{proof}
\begin{proof}[Proof of Theorem \ref{finalresult}]
It follows from Lemma \ref{1}, \ref{2} and \ref{3}.
\end{proof}
Here is an illustration.
\begin{ex}
Consider the matrix
$$A=\begin{pmatrix}
~0 & -2 & ~0 & ~2 & 2 & 1 & ~0 & 0 & 0 & 0\\
-1 & ~0 & ~1 & ~0 & 0 & 0 & -3 & 0 & 0 & 0\\
~0 & ~3 & ~0 & -1 & 0 & 0 & ~0 & 2 & 2 & 0\\
-1 & ~0 & ~1 & ~0 & 0 & 0 & ~0 & 0 & 0 & 2\\
~1 & ~0 & ~0 & ~0 & 0 & 0 & ~0 & 0 & 0 & 0\\
-3 & ~0 & ~0 & ~0 & 0 & 0 & ~0 & 0 & 0 & 0 \\
~0 & -2 & ~0 & ~0 & 0 & 0 & ~0 & 0 & 0 & 0\\
~0 & ~0 & ~3 & ~0 & 0 & 0 & ~0 & 0 & 0 & 0\\
~0 & ~0 & -1 & ~0 & 0 & 0 & ~0 & 0 & 0 & 0\\
~0 & ~0 & ~0 & ~2 & 0 & 0 & ~0 & 0 & 0 & 0\\
\end{pmatrix}.$$
Then, $D(A)$ is the digraph $D_1$ in Figure \ref{D1}. Here, all the maximum matchings of $D(A)$ are given by: 
$$M_1=\{(1,5,1),(2,7,2),(3,8,3),(4,10,4)\},$$
$$M_2=\{(1,5,1),(2,7,2),(3,9,3),(4,10,4)\},$$
$$M_3=\{(1,6,1),(2,7,2),(3,8,3),(4,10,4)\}$$
and
$$M_4=\{(1,6,1),(2,7,2),(3,9,3),(4,10,4)\}.$$
So, $\Delta _A=288+(-96)+(-432)+144=-96$. Let $A^{\#}=(\alpha _{ij})$. Let us compute $\alpha _{57}$. First, $P_3(5,7)=1\times (-2)\times (-3)=6$. Note that $C_3(5,7)$ cycle chain is alternating with respect to the maximum matchings $M_1$ and $M_2$. Thus, $\beta _{57}=(-1)\times 6=-6$, $\beta _{\overline{5,7}}(M_1)=6\times 4=24$ and $\beta _{\overline{5,7}}(M_2)=(-2)\times 4=-8$. So,
$$\mu _{57}=(-6)\times (24-8)=-96.$$
Therefore, $\alpha _{57}=\frac{-96}{-96}=1$.
\end{ex}
\section{Necessary and sufficient condition of a real matrix $A$ such that $D(A),D(A^{\#})\in \mathcal{D}$}\label{section3}
Before we proceed, let us provide an example. 
\begin{ex}\label{twoexample}
First we consider a matrix
$$A=\begin{pmatrix}
0 & ~2 & -1 & 0 & 0\\
1 & ~0 & ~0 & 1 & 1\\
1 & ~0 & ~0 & 0 & 0\\
0 & -2 & ~0 & 0 & 0\\
0 & -2 & ~0 & 0 & 0\\
\end{pmatrix}.$$
Then $D(A)\in \mathcal{D}$ and
$$A^{\#}=\begin{pmatrix}
~0 & 0 & ~1 & ~0 & ~0\\
~0 & 0 & ~0 & -\frac{1}{4} & -\frac{1}{4}\\
-1 & 0 & ~0 & -\frac{1}{2} & -\frac{1}{2}\\
~0 & \frac{1}{2} & -\frac{1}{2} & ~0 & ~0\\
~0 & \frac{1}{2} & -\frac{1}{2} & ~0 & ~0\\
\end{pmatrix}.$$
Since the non-pendant vertices 2, 4 and 5 are not adjacent to any pendant vertex in $D(A^{\#})$, $D(A^{\#})\notin \mathcal{D}$. On the other hand, for a matrix $$B=\begin{pmatrix}
~0 & 1 & 1 & 2 & -1\\
-1 & 0 & 0 & 0 & ~0\\
~2 & 0 & 0 & 0 & ~0\\
~1 & 0 & 0 & 0 & ~0\\
~2 & 0 & 0 & 0 & ~0\\
\end{pmatrix},$$
$D(B)$ is a star tree and belongs to $\mathcal{D}$. Since $B^{\#}=B$, $D(B^{\#})$ also belongs to $\mathcal{D}$.
\end{ex}
The two examples above serve a good motivation for our main result (Theorem \ref{maintheorem}) in this section.

The first result in this section shows that the $ij$-th entry of the group inverse of a real matrix $A$ with $D(A)\in \mathcal{D}$ is non-zero when $i,j$ are maximally matchable in $D(A)$.
\begin{cor}\label{nonzero}
Let $A$ satisfy the hypothesis of Theorem \ref{finalresult}. If $i,j$ are maximally matchable and $A^{\#}=(\alpha _{ij})$, then $\alpha _{ij}\neq 0$.
\end{cor}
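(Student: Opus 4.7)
The plan is to apply Theorem \ref{finalresult} to reduce the claim to showing $\mu_{ij}\neq 0$, and then exploit the factored form $\mu_{ij}=\beta_{ij}\cdot\sum_{M\in\mathbb{M}(i,j)}\beta_{\overline{i,j}}(M)$ to verify nonvanishing of each factor separately. Since $\Delta_A\neq 0$, Theorem \ref{finalresult} gives $\alpha_{ij}=\mu_{ij}/\Delta_A$, so it suffices to prove $\mu_{ij}\neq 0$ whenever $i\neq j$ are maximally matchable.

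The first factor is automatically nonzero. By definition $\beta_{ij}=(-1)^{(m-1)/2}P_m(i,j)$, where $P_m(i,j)$ is the product of entries $a_{i_k,i_{k+1}}$ along the unique alternating cycle chain $C_m(i,j)$ (uniqueness is Proposition \ref{uniquealternating}(ii)); all these entries are nonzero because the corresponding arcs lie in $D(A)$.

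The main work is therefore to prove that the quantity $S:=\sum_{M\in\mathbb{M}(i,j)}\beta_{\overline{i,j}}(M)$ is nonzero. Here Proposition \ref{uniquealternating}(i) restricts $m$ to $\{1,3\}$. In the length-one case, $C_1(i,j)=(i,j,i)$ is a pendant cycle in every $M\in\mathbb{M}(i,j)$; without loss of generality $i$ is pendant with neighbor $j$, so each such $M$ is built by also choosing, independently, a pendant neighbor to match each non-pendant vertex $r\neq j$. Summing $\beta_{\overline{i,j}}(M)$ over all such $M$ therefore factors as
\[
S=\prod_{\substack{r\text{ non-pendant}\\ r\neq j}}\alpha_r=\frac{\Delta_A}{\alpha_j},
\]
which is nonzero, since $\Delta_A=\prod_r\alpha_r\neq 0$ forces each $\alpha_r\neq 0$ (as noted in the proof of the existence proposition). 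In the length-three case, Remark \ref{properties} forces $i,j$ to be pendant and their non-pendant neighbors $p,q$ to sit on the chain with matching cycles $(i,p,i)$ and $(q,j,q)$; the same independent-choice argument at each non-pendant vertex $r\notin\{p,q\}$ yields $S=\Delta_A/(\alpha_p\alpha_q)\neq 0$.

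I do not expect a serious obstacle: the key observation is simply that once the matching cycles along the (unique) alternating chain are fixed, the remaining cycles of any $M\in\mathbb{M}(i,j)$ are chosen independently at each remaining non-pendant vertex among its pendant neighbors, so $S$ factors into a product of the $\alpha_r$'s off the chain. The only thing worth double-checking is the degenerate case $n=2$ of the length-one subcase, where $D(A)$ is a single $2$-cycle and the empty product gives $S=1\neq 0$ immediately.
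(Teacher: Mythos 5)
Your proof is correct. The paper itself gives no argument here: it simply states that the proof follows from the proof of Corollary 2.12 of \cite{nandi}, the analogous statement for tree digraphs. What you have done is supply, in full, the argument that the paper leaves to that citation, and your reasoning is the natural (and almost certainly the intended) one: reduce to $\mu_{ij}\neq 0$ via Theorem \ref{finalresult}, observe that $\beta_{ij}$ is a product of nonzero entries of $A$ along the unique alternating cycle chain (Proposition \ref{uniquealternating}), and then show the remaining sum is nonzero. The genuinely useful content you add beyond the citation is the explicit justification that the argument survives the passage from trees to all of $\mathcal{D}$: by Proposition \ref{nononpendant} every maximum matching is an independent choice of one pendant cycle at each non-pendant vertex, so $\Delta_A=\prod_r\alpha_r$ and the sum $\sum_{M\in\mathbb{M}(i,j)}\beta_{\overline{i,j}}(M)$ factors as $\Delta_A/\alpha_j$ (length-one chain) or $\Delta_A/(\alpha_p\alpha_q)$ (length-three chain), each nonzero because $\Delta_A\neq 0$ forces every $\alpha_r\neq 0$. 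Your case split $m\in\{1,3\}$ is exactly what Proposition \ref{uniquealternating}(i) and Remark \ref{properties} license, and your treatment of the degenerate $n=2$ case is a sensible precaution. I see no gap; if anything, your write-up is more informative than the paper's one-line deferral.
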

\begin{proof}
The proof follows from the proof of \cite[Corollary 2.12]{nandi}.
\end{proof}
In general, it is not true that the digraph corresponding to the group inverse of a matrix with a simple symmetric digraph is again a simple symmetric digraph, shown by the following example.
\begin{ex}\label{ssd}
Consider a matrix with simple symmetric digraph, 
\begin{center}
$A=\begin{pmatrix}
0 & 2 & 1 & 2 & 1\\
2 & 0 & 2 & 0 & 0\\
1 & 2 & 0 & 0 & 0\\
2 & 0 & 0 & 0 & 0\\
1 & 0 & 0 & 0 & 0\\
\end{pmatrix}$. Then, its group inverse
$A^{\#}=\begin{pmatrix}
0 & \ \ 0 & \ \ 0 & \ \ \frac{2}{5} & \ \ \frac{1}{5}\\
0 & \ \ 0 & \ \ \frac{1}{2} & -\frac{1}{5} & -\frac{1}{10}\\
0 & \ \ \frac{1}{2} & \ \ 0 & -\frac{2}{5} & -\frac{1}{5}\\
\frac{2}{5} & -\frac{1}{5} & -\frac{2}{5} & \ \ \frac{8}{25} & \ \ \frac{4}{25}\\
\frac{1}{5} & -\frac{1}{10} & -\frac{1}{5} & \ \ \frac{4}{25} & \ \ \frac{2}{25}\\
\end{pmatrix}$.
\end{center}
It is clear that $D(A)$ is a simple symmetric digraph, while $D(A^{\#})$ is not a simple symmetric digraph.
\end{ex}
In the following result, we show that when $D(A)\in \mathcal{D}$ for a real matrix $A$, $D(A^{\#})$ is a simple symmetric digraph.
\begin{cor}\label{ssd2}
Let $A$ be an $n\times n$ real matrix such that $D(A)\in \mathcal{D}$. Assume that $D(A)$ is strongly connected and $\Delta _A\neq 0$. Then, $D(A^{\#})$ is a strongly connected simple symmetric digraph.
\end{cor}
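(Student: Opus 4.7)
The plan is to verify three separate properties of $D(A^{\#})$: the absence of loops, the symmetry of its edge set, and the connectedness of its underlying undirected graph (which, combined with symmetry, upgrades to strong connectivity).

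First, the absence of loops should be immediate from Theorem~\ref{finalresult}: we have $\alpha_{ii} = \mu_{ii}/\Delta_A$, and the definition in \eqref{mu} explicitly sets $\mu_{ii}=0$ because the pair $(i,i)$ is by fiat not maximally matchable. Next, I would derive the symmetry $\alpha_{ij}\neq 0 \iff \alpha_{ji}\neq 0$ by combining Corollary~\ref{nonzero} with the identity $\mathbb{M}(i,j)=\mathbb{M}(j,i)$: the corollary tells us that $\alpha_{ij}\neq 0$ is equivalent to $i,j$ being maximally matchable, and this relation is manifestly symmetric in $i$ and $j$.

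The main content is strong connectivity, which by the symmetry above reduces to showing that the underlying graph of $D(A^{\#})$ is connected. Since $D(A)$ is strongly connected, its underlying graph is connected, so it suffices to exhibit, for every edge $\{u,v\}$ of $D(A)$, a walk from $u$ to $v$ in $D(A^{\#})$. I would split on whether the endpoints are pendant or non-pendant in $D(A)$. If $u$ is pendant and $v$ non-pendant, then the pendant cycle $(u,v,u)$ is itself a length-one alternating cycle chain (take any maximum matching containing it, possible by Remark~\ref{properties}), so $u,v$ are maximally matchable and $\alpha_{uv}\neq 0$ by Corollary~\ref{nonzero}.

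The harder subcase is when both $u,v$ are non-pendant. Here I would pick pendant neighbors $u'$ of $u$ and $v'$ of $v$, which exist because $D(A)\in \mathcal{D}$; they are distinct since a pendant vertex has a unique non-pendant neighbor. Using Proposition~\ref{nononpendant} and Remark~\ref{properties}, a maximum matching is obtained by choosing one pendant cycle at each non-pendant vertex, so I can assemble a maximum matching $M$ containing both $(u',u,u')$ and $(v,v',v)$ simultaneously; then the chain $\bigl((u',u,u'),(u,v,u),(v,v',v)\bigr)$ is alternating with respect to $M$. Hence $u',v'$ are maximally matchable, $\alpha_{u'v'}\neq 0$ by Corollary~\ref{nonzero}, and together with the pendant-to-neighbor edges $\alpha_{uu'}, \alpha_{vv'}\neq 0$ from the previous subcase this produces a walk $u - u' - v' - v$ in $D(A^{\#})$. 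The degenerate case $n=2$ (where no non-pendant vertices exist) is trivial: $D(A)\cong K_2$, $A$ is invertible, and $D(A^{\#})=D(A^{-1})=D(A)$. Concatenating these short walks along a path in $D(A)$ between any two prescribed vertices then yields the required connectivity. I expect this last subcase (two adjacent non-pendant vertices) to be the only step requiring real care, precisely because non-pendant vertices are generically not directly joined in $D(A^{\#})$ and have to be connected via their pendant neighbours.
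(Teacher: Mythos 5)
Your treatment of loop-freeness (via $\mu_{ii}=0$) and of combinatorial symmetry (via Corollary~\ref{nonzero} together with $\mathbb{M}(i,j)=\mathbb{M}(j,i)$) coincides with the paper's own argument. Where you genuinely diverge is strong connectivity: the paper disposes of it in one line by invoking an external result (\cite[Lemma 2.4]{kls}, that the group inverse of an irreducible matrix is again irreducible), whereas you build connectivity by hand, replacing each edge of $D(A)$ by a short walk in $D(A^{\#})$ --- a pendant--non-pendant edge $(u,v,u)$ survives directly because it lies in some maximum matching, and an edge between two non-pendant vertices $u,v$ is rerouted as $u - u' - v' - v$ through pendant neighbours via the length-three alternating cycle chain $\bigl((u',u,u'),(u,v,u),(v,v',v)\bigr)$. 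Your case analysis is complete (the only 2-cycles are pendant ones with exactly one pendant endpoint, non-pendant ones, and the degenerate $K_2$), and the needed maximum matchings exist because, by Proposition~\ref{nononpendant} and Remark~\ref{properties}, a maximum matching is exactly an independent choice of one pendant cycle at each non-pendant vertex. What your route buys is a self-contained, purely combinatorial proof that also exhibits explicitly which edges of $D(A^{\#})$ witness the connectivity; what the paper's route buys is brevity. Both are correct.
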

\begin{proof}
Since $D(A)$ is strongly connected, $A$ is an irreducible matrix. Then by \cite[Lemma 2.4]{kls}, $A^{\#}$ is irreducible. So, $D(A^{\#})$ is strongly connected. Let $A^{\#}=(\alpha _{ij})$. Then $\mu _{ii}=0$ implies $\alpha _{ii}=0$ and so, $D(A^{\#})$ is simple. For any $i$ and $j$, suppose $\alpha _{ij}\neq 0$. Then, by Theorem \ref{finalresult}, $i,j$ are maximally matchable. Clearly, $j,i$ are also maximally matchable and so, from Corollary \ref{nonzero}, $\alpha _{ji}\neq 0$. Thus, $A^{\#}$ is a combinatorially symmetric matrix and $D(A^{\#})$ is a simple symmetric digraph.
\end{proof}
Recall that a simple symmetric digraph is said to be {\it corona digraph} if each non-pendant vertex is adjacent to exactly one pendant vertex.
\begin{prop}\label{starcorona}
Let $A=(a_{ij})$ be an $n\times n$ real matrix such that $\Delta _A\neq 0$.\\
\rm{$(i)$} If $D(A)$ is a star tree digraph, then $D(A^{\#})$ is a star tree digraph.\\
\rm{$(ii)$} If $D(A)$ is a corona digraph, then $D(A^{\#})$ is a corona digraph.
\end{prop}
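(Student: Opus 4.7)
The plan is to combine Theorem \ref{finalresult} with Corollary \ref{nonzero} to read off the support of $A^{\#}$: writing $A^{\#} = (\alpha_{ij})$, the entry $\alpha_{ij}$ is nonzero if and only if $i$ and $j$ are maximally matchable in $D(A)$. Corollary \ref{ssd2} further guarantees that $D(A^{\#})$ is a simple symmetric digraph, so both parts reduce to classifying which pairs of vertices admit an alternating cycle chain in $D(A)$. This classification is tight thanks to Proposition \ref{uniquealternating}(i) (length at most three) and Remark \ref{properties} (length-three chains have pendant endpoints).

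For part (i), let $c$ be the unique non-pendant vertex and let $p_1, \ldots, p_{n-1}$ be its pendant neighbors. By Proposition \ref{nononpendant}, every maximum matching of $D(A)$ is a single pendant cycle $(c, p_s, c)$; in particular, the matching number is one. A length-three alternating cycle chain would need two disjoint matching cycles, which is impossible, so the only maximally matchable pairs are $\{c, p_s\}$ for $s = 1, \ldots, n-1$. Hence the edges of $D(A^{\#})$ are exactly those of $D(A)$, and $D(A^{\#})$ is again a star tree.

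For part (ii), label the non-pendants $v_1, \ldots, v_k$ and let $p_i$ denote the unique pendant neighbor of $v_i$. Since non-pendant cycles cannot appear in a maximum matching (Proposition \ref{nononpendant}) and every non-pendant vertex is matched (Remark \ref{properties}), there is a unique maximum matching $M^{*} = \{(v_i, p_i, v_i)\}_{i=1}^k$. Alternating cycle chains of length one are exactly the cycles of $M^{*}$, giving maximally matchable pairs $\{v_i, p_i\}$. A length-three alternating cycle chain is forced by Remark \ref{properties} to have the form
\[
\bigl((p_s, v_s, p_s),\ (v_s, v_t, v_s),\ (v_t, p_t, v_t)\bigr),
\]
which exists precisely when $v_s$ and $v_t$ are adjacent non-pendants in $D(A)$, contributing the maximally matchable pair $\{p_s, p_t\}$. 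No other pair can be maximally matchable: a length-one chain is a pendant cycle, and both endpoints of a length-three chain must be pendant, so pairs of non-pendants and mixed pairs $\{v_i, p_j\}$ with $i \neq j$ are excluded.

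To finish (ii), I read off $D(A^{\#})$. Each $v_i$ is adjacent only to $p_i$, hence pendant in $D(A^{\#})$. Each $p_i$ is adjacent in $D(A^{\#})$ to $v_i$ and to every $p_j$ with $v_i, v_j$ adjacent in $D(A)$. Since $D(A)$ is strongly connected and every $v_i$ has only one pendant neighbor, whenever $k \geq 2$ each $v_i$ has at least one non-pendant neighbor, so each $p_i$ is non-pendant in $D(A^{\#})$ and is adjacent to exactly one pendant, namely $v_i$. This is precisely the corona condition; the degenerate case $k = 1$ reduces to the two-vertex star tree covered by part (i). The main technical point is ensuring the length-three analysis captures exactly the right pendant-pendant pairs in the corona case, and this is the only step beyond direct bookkeeping.
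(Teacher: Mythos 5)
Your argument is correct. For part (i) it coincides with the paper's proof: both identify the maximum matchings of a star as the single pendant cycles $(c,p_s,c)$ and invoke Theorem \ref{finalresult} together with Corollary \ref{nonzero} to conclude that the support of $A^{\#}$ equals the set of maximally matchable pairs, which is exactly the edge set of the star. For part (ii), however, you take a genuinely different route. The paper works algebraically: it writes $A$ in the block form \eqref{glsform} with $F$ and $G$ diagonal and invertible, observes that $A$ is then nonsingular, computes $A^{-1}=\begin{pmatrix}0 & G^{-1}\\ F^{-1} & -F^{-1}EG^{-1}\end{pmatrix}$ explicitly, and notes that this is permutation-similar to a matrix with the same zero/non-zero pattern as $A$, hence a corona. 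You instead stay entirely combinatorial, using the uniqueness of the maximum matching $M^{*}=\{(v_i,p_i,v_i)\}$ and the classification of alternating cycle chains (length one or three, with length-three chains forced to be of the form $p_s$--$v_s$--$v_t$--$p_t$) to read off the edges of $D(A^{\#})$ directly. Your approach has the merit of treating both parts uniformly with the same machinery and of exhibiting the combinatorial structure of $D(A^{\#})$ (roles of pendants and non-pendants swap), while the paper's computation gives the inverse explicitly and shows more, namely that $D(A^{\#})$ is isomorphic to $D(A)$ as a pattern. Your handling of the degenerate case $k=1$ and your use of strong connectivity to guarantee that each $v_i$ has a non-pendant neighbour when $k\geq 2$ are both necessary and correctly supplied.
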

\begin{proof}
$(i)$ Let the vertex set of $D(A)$ be $\{1,2,\ldots ,n\}$. Without loss of generality, let the center vertex of $D(A)$ be $n$. Then, the only non zero entries in $A$ are $a_{in},a_{ni}$ for all $1\leq i\leq n-1$. On the other hand, the maximum matchings of $D(A)$ are $M_i=\{(n,i,n)\}$ and so, the only maximum matchable pair of vertices are $n,i$ for all $1\leq i\leq n-1$. Let $A^{\#}=(\alpha _{ij})$. Then, by Theorem \ref{finalresult} and Corollary \ref{nonzero}, the only non zero entries of $A^{\#}$ are $\alpha _{in},\alpha _{ni}$ for all $1\leq i\leq n-1$. Thus, $D(A^{\#})$ is a star tree digraph.

$(ii)$ If $D(A)$ is a corona digraph, then $n$ is even. Let the vertex set of $D(A)$ be $\{1,2,\ldots ,\frac{n}{2}, \frac{n}{2} +1,\ldots n\}$. Without loss of generality, let the set of non-pendant vertices and pendant vertices of $D(A)$ be $\{1,2,\ldots ,\frac{n}{2}\}$ and $\{\frac{n}{2} +1,\frac{n}{2} +2,\ldots n\}$, respectively, where vertex $k$ is adjacent to vertex $\frac{n}{2}+k$ for all $1\leq k\leq \frac{n}{2}$. Then, the matrix $A$ is of the form \ref{glsform}, where $F$ and $G$ both are diagonal matrices of order $\frac{n}{2}$ with non-zero entries in the diagonal. Also, $A$ is invertible and 
\begin{center}
$A^{\#}=A^{-1}=\begin{pmatrix}
0 & G^{-1} \\
F^{-1} & -F^{-1}EG^{-1} \\
\end{pmatrix}.$
\end{center}
Since $F$ and $G$ are both diagonal matrices with non-zero entries in the diagonal, the zero non-zero pattern of $E$ and $-F^{-1}EG^{-1}$ will be the same. Then, there exists a permutation matrix $P$ such that
\begin{center}
$P^{-1}A^{\#}P=\begin{pmatrix}
-F^{-1}EG^{-1} & F^{-1} \\
G^{-1} & 0 \\
\end{pmatrix}.$
\end{center}
Since the zero non-zero pattern of $A$ and $P^{-1}A^{\#}P$ are same, $D(A^{\#})$ is also a corona digraph.
\end{proof}
Finally, we classify all the real square matrix $A$ such that $D(A)$ and $D(A^{\#})$ both are in the class $\mathcal{D}$, that is $D(A)$ is either a corona digraph or a star digraph.
\begin{proof}[Proof of Theorem \ref{maintheorem}]
Let $D(A)$ be a corona or star tree digraph. Then, by Proposition \ref{starcorona}, $D(A^{\#})\in \mathcal{D}$. Conversely, suppose $D(A)$ is neither a corona nor a star tree digraph. Since $D(A)\in \mathcal{D}$, there exists a non-pendant vertex adjacent to more than one pendant vertices and at least two non-pendant vertices in $D(A)$. Let $i$ be a non-pendant vertex adjacent to $s\geq 2$ pendant vertices $\{i_1,i_2,\ldots ,i_s\}$ and $i$ is adjacent to another non-pendant vertex $j$. The alternating cycle chains that start or end with vertex $i$ are $((i,i_m,i))$ for all $1\leq m\leq s$. So, $i$ is adjacent to exactly $s$ vertices $\{i_1,i_2,\ldots ,i_s\}$ in $D(A^{\#})$. Let $j$ be adjacent to a pendant vertex $\tilde{j}$. Then, for any $1\leq m\leq s$, $((i_m,i,i_m))$ and $((i_m,i,i_m),(i,j,i),(j,\tilde{j},j))$ are two alternating cycle chains start from vertex $i_m$. So, vertex $i_m$ is non-pendant in $D(A^{\#})$ for all $1\leq m\leq s$. Since we have a non-pendant vertex $i$ which is not adjacent to any pendant vertices in $D(A^{\#})$, $D(A^{\#})$ does not belong to $\mathcal{D}$.
\end{proof}

\section*{Acknowledgments}
The author thanks the Indian Institute of Information Technology, Design \& Manufacturing, Kurnool for providing a good research environment.

\section*{Declarations}
\noindent{\textbf{Conflict of Interest:}} The authors declare that there is no conflict of interest with anyone.

\end{document}